\title{   \LARGE   Glivenko--Cantelli classes and $NIP$ formulas \\ 
}
\author{Karim Khanaki\thanks{Partially supported by IPM grant 1400030118}\\Arak University of Technology}
\newtheorem{Theorem}{Theorem}[section]
\newtheorem{Proposition}[Theorem]{Proposition}
\newtheorem{Definition}[Theorem]{Definition}
\newtheorem{Remark}[Theorem]{Remark}
\newtheorem{Lemma}[Theorem]{Lemma}
\newtheorem{Corollary}[Theorem]{Corollary}
\newtheorem{Fact}[Theorem]{Fact}
\newtheorem{Question}[Theorem]{Question}
\newtheorem{Convention}[Theorem]{Convention}
\def\dotminus{\mathbin{\ooalign{\hss\raise1ex\hbox{.}\hss\cr
  \mathsurround=0pt$-$}}}
\begin{document}
\maketitle

\begin{abstract}   We give several new equivalences of $NIP$ for formulas and   new proofs of known results using   \cite{Talagrand} and  \cite{HOR}. We emphasize that Keisler measures are more complicated than types (even in the $NIP$ context), in an analytic sense.	Among other things, we show that for a first order theory $T$ and a formula $\phi(x,y)$, the following are equivalent:

\medskip\noindent
 (i) $\phi$ has $NIP$ with respect to $T$.
 \newline
  (ii)  For any global $\phi$-type $p(x)$ and any   model $M$, if $p$ is finitely satisfiable in $M$, then $p$ is generalized $DBSC$ definable over $M$. In particular, if $M$ is countable, then  $p$ is  $DBSC$ definable over $M$. (Cf. Definition~\ref{Borel definability}, Fact~\ref{strong Borel=DBSC}.)
\newline
(iii)  For any  global Keisler $\phi$-measure $\mu(x)$ and any  model $M$, if $\mu$ is finitely satisfiable in $M$, then $\mu$ is generalized Baire-1/2 definable over $M$.  In particular, if $M$ is countable,  $\mu$ is  Baire-1/2  definable over $M$. (Cf. Definition~\ref{strong Borel measure}.) 
\newline
(iv) For any model $M$ and any Keisler  $\phi$-measure 
$\mu(x)$ over $M$,
\begin{align*}
\sup_{b\in M}\Big|\frac{1}{k}\sum_{i=1}^k\phi(p_i,b)-\mu(\phi(x,b))\Big|\to 0,
\end{align*}
\newline
 for almost every $(p_i)\in S_{\phi}(M)^{\Bbb N}$ with the product measure $\mu^{\Bbb N}$.  (Cf. Theorem~\ref{fim}.) 
 \newline
 (v) Suppose moreover that $T$ is countable and $NIP$, then for any countable model $M$, the space of global $M$-finitely satisfied types/measures is a Rosenthal compactum. (Cf. Theorem~\ref{global angelic}.)
\end{abstract}

\section{Introduction} \label{1}  
This paper is a kind of companion-piece to \cite{KP} and \cite{K3}  although here we study equivalences of $NIP$ formulas and local Keisler measures in classical first order logic.   We  also give   alternatives/refinements  of some  results of \cite{HP}, \cite{HPS} and \cite{Gannon sequential}. The main results/observations are Lemma~\ref{bounded variation-continuous}, Theorem~\ref{DBSC measure}, Theorem~\ref{fim}, 
 Corollaries~\ref{generalized definable}, Theorem~\ref{almost fam}, 
  and Theorem~\ref{global angelic}.

We believe that the novelty of the results in this article is as follows:  assuming $NIP$ for a formula $\phi$, we show that $\phi$-types correspond to a {\em proper} subclass of Baire-1 functions (i.e. $DBSC$) and Keisler $\phi$-measures correspond to another {\em proper} subclass of Baire-1 functions (i.e. Baire-1/2), and these classes are drastically {\em different}, in an analytic sense.
This means that measures are definitely more complicated than types, in this sense. The complexity can lead to the definition of new classes of $NIP$ theories in the classical logic; that is, we can distinguish between theories in which types and measures have the same complexity and those that do not.\footnote{Cf. Definition~\ref{semi-NIP}, Theorem~\ref{DBSC measure}, and subsection ``Thesis" below.}
In the framework of continuous logic \cite{BBHU} this complexity has even more important consequences, as this leads to the fact,  contrary to a claim in the literature,
 that there is {\em no} a perfect analog of Shelah's
theorem, namely a theory is unstable iff it has $IP$ or $SOP$ (cf. \cite{K-classification}). On the other hand, using the Talagrand's characterization of Glivenko-Cantelli classes, we give some new equivalences of $NIP$ for formulas that can have future applications.
Finally, we show  how to transfer these local results  into results for {\em global} types/measures. This transfer will lead to new results and new proofs of the  previous results.

Let us give some historical points on the subject.
The notion of  {\em dependence} for a family of sets/functions was introduced independently in model theory \cite{Sh},   learning theory \cite{VC}, and   function theory \cite{Ros}.\footnote{Recall that in  model theory the notion of dependence and the $NIP$ (non independence property) are the same, and its negation is called the $IP$ (independence property).}  The combinatorial nature of the concept soon became apparent. The connection between the Vapnik-Chervonenkis results in learning theory and the $NIP$ in model theory was discovered by Laskowski \cite{Las}.
In fact,  \cite{VC} characterizes the Glivenko-Cantelli classes  using the notion of independence.
In the function theory setting, the initial results led to the invention of the concept of {\em Fremlin-Talagrand stability} by David~Fremlin (cf. Definition~\ref{Talagrand-stable} below).
The connection of this concept with the Glivenko-Cantelli classes was later identified and studied by Talagrand \cite{Talagrand}. His characterization of the Glivenko-Cantelli classes has some advantages over \cite{VC}, as it is more suitable for studying (real-valued) functions and also has no additional measurability conditions. In fact, all the power of that concept lies in empirical  measures or convexity. The importance of this issue was understood in \cite{HPP}, in which the concept of Keisler measure was studied. Keisler measures had previously been invented and studied in \cite{Keisler-fork}.
Thus all the power of the notion (i.e. convexity and empirical  measures) was used in model theory  in \cite{HPP} and \cite{HP}. In the present paper, the relationship between $NIP$ and the Glivenko-Cantelli classes is explicitly studied and a completely functional expression of the subject is presented.

Let us present the motivation of this article. Naturally, it can be interesting to examine the relationship between one area of mathematics and other areas. When a particular concept is independently introduced and studied  in different areas, that concept is inherently of special mathematical importance. For example, we can refer to the stability property and $NIP$ in model theory, which also appear in various other fields such as functional analysis and learning theory. This allows the tools, results, and approaches of one field to be used in another field, and vice versa. This approach also improves the previous results and leads to new results, and sheds light on what was previously known.

Surprisingly, many tools already existed in functional analysis and probability theory.  Some of our results/observations-- not all them--  are just routine translations of the results of \cite{Talagrand} and \cite{HOR} into the language of model theory. However, there are several reasons that justify and even necessitate the use of this translation: (1) Here we provide a useful dictionary (from model theory to analysis and vice versa) that helps to understand the connections between these areas of mathematics and it can be used in  future applications. (2) Many results in the language of analysis can be more easily generalized to   continuous logic  and  these results have interesting applications both for logic itself and for  mathematics (see for example \cite{K-classification} and \cite{K-Banach}). 

 On the other hand, it is worthwhile to provide  more direct arguments and model-theoretic proofs of the results of this  paper, although we believe that any  model-theoretic proof of these analytical facts will necessarily express a similar argument in the language of   model theory and will not differ much in nature.

This paper is organized as follows. In  section 2, we   provide all function theory background and emphasize the relationship between some of the classes of functions that we will use later.

In  section 3, we present the relationship between types/measures and analytical concepts, and  provide a useful dictionary between model theory and analysis. We also emphasize the distinction between types and measures (over countable models or sets) in an analytic sense.

In  section 4, we study the relationship between Glivenko--Cantelli-classes and $NIP$ formulas,  and using alternative proofs, we provide generalizations of some of the results of section 3. 

In Section~5, we give a short proof (and refinements) of a new result on angelic spaces of {\em global} types/measures.

\section{Preliminary for function spaces}
If $f$ is a real-valued function on a set $X$, the symbol $[f>r]$ stands for $\{x\in X:f(x)>r\}$, and $f^{-1}[F]$ stands for $\{x\in X:f(x)\in F\}$. Similarly, we abbreviate $[f\geq r]$, $[f<r]$ and $[f\leq r]$. We define the {\bf positive} and {\bf negative parts} of $f$ to be 
$$f^+(x)=\max(f(x),0), \ \ \ \ \ \ \ \ f^-(x)=\max(-f(x),0).$$
Note that $f=f^+-f^-$ and $|f|=f^++f^-$.

Let $F\subseteq{\Bbb R}^X$ be a set of real-valued functions on $X$.
The topology
of {\em pointwise convergence} on $F$ is that inherited from the
usual product topology of ${\Bbb R}^X$. A typical neighborhood of
a function $f$ is $$U_f(x_1,\ldots,x_n;\epsilon)=\{g\in F:|f(x_i)-g(x_i)|<\epsilon\text{ for all }i\leq n\},$$ where
$\epsilon>0$ and $\{x_1,\ldots,x_n\}$ is a finite subset of $X$.
In this paper, the topology on subsets of ${\Bbb R}^X$ is
pointwise convergence. Otherwise, we explicitly state what is our
desired topology.

It is easy to verify that a sequence $(f_n)_{n<\omega}$  of real-valued functions on $X$ converges pointwise to a function $f$ if for any $\epsilon>0$ and any $x\in X$ there is a natural number $n_{\epsilon,x}$ such that $|f_n(x)-f(x)|<\epsilon$ for all $n\geqslant n_{\epsilon,x}$.  In this case, sometimes we write  $f_n\to f$. We say that $(f_n)_{n<\omega}$ converges {\em uniformly} to $f$ if  for any $\epsilon>0$  there is a natural number $n_{\epsilon}$ such that $|f_n(x)-f(x)|<\epsilon$ for all $n\geqslant n_{\epsilon}$ and all $x\in X$.

Let  $(f_n)_{n<\omega}$  be a sequence of   real-valued functions on $X$.  We always assume that  $f_0\equiv 0$, that is,  $f_0(x)=0$ for all $x\in X$.  
   
   \begin{Fact} \label{0} Let $(f_n)_{n<\omega}$  be as above  (i.e. $f_0\equiv 0$). Then the following are equivalent:
   	\begin{itemize}
   		\item[(i)] 	There is $C>0$ such that $\sum_{n<\omega}|f_{n+1}(x)-f_n(x)|\leq C$ for all $x\in X$.
   		\item[(ii)] There is a function $f$ such that: (a) $f_n\to f$, and (b) $f=F_1-F_2$ where  $F_1=\sum_{n<\omega}(f_{n+1}-f_n)^+$ and $F_2=\sum_{n<\omega}(f_{n+1}-f_n)^-$, and $F_1,F_2$ are bounded.
   	\end{itemize}
   \end{Fact}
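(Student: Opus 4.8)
The plan is to work pointwise and then globalise, reducing the whole statement to two elementary facts about a bounded real sequence $(a_n)$ with $a_0=0$: that $\sum_n(a_{n+1}-a_n)$ converges absolutely iff both of the nonnegative series $\sum_n(a_{n+1}-a_n)^{+}$ and $\sum_n(a_{n+1}-a_n)^{-}$ converge; and that the partial sum $\sum_{n<N}(a_{n+1}-a_n)$ telescopes to $a_N$.

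For (i)$\Rightarrow$(ii): fix $x\in X$. By (i) the series $\sum_n(f_{n+1}(x)-f_n(x))$ is absolutely convergent, so $g^{+}(x):=\sum_n(f_{n+1}(x)-f_n(x))^{+}$ and $g^{-}(x):=\sum_n(f_{n+1}(x)-f_n(x))^{-}$ both converge, each to a value $\le C$. Telescoping and using $f_0\equiv 0$ give $f_N(x)=\sum_{n<N}(f_{n+1}(x)-f_n(x))\to g^{+}(x)-g^{-}(x)$; set $f(x):=g^{+}(x)-g^{-}(x)$. Then $(f_n(x))_n$ converges to $f(x)$ for every $x$, hence has a unique ultrafilter limit, namely $f(x)$, and since the ambient topology is pointwise convergence this yields $f=\lim_{\cal D}f_n$; by construction $f=g^{+}-g^{-}=\sum_n(f_{n+1}-f_n)^{+}-\sum_n(f_{n+1}-f_n)^{-}$ with $g^{\pm}$ bounded by $C$.

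For (ii)$\Rightarrow$(i): the clause $f=\lim_{\cal D}f_n$ is automatic here, since uniform boundedness puts the $f_n$ in the compact space $[-M,M]^X$ (for $M$ a uniform bound), so the content of (ii) is the decomposition $f=g^{+}-g^{-}$ with $g^{\pm}:=\sum_n(f_{n+1}-f_n)^{\pm}$ bounded functions. From $|f_{n+1}(x)-f_n(x)|=(f_{n+1}(x)-f_n(x))^{+}+(f_{n+1}(x)-f_n(x))^{-}$, summing over $n$, one gets $\sum_n|f_{n+1}(x)-f_n(x)|=g^{+}(x)+g^{-}(x)$, so $C:=\|g^{+}\|_\infty+\|g^{-}\|_\infty$ works in (i).

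The one delicate point is the interpretation of the infinite sums of functions appearing in (ii): the identity there must be understood as asserting that $\sum_n(f_{n+1}-f_n)^{\pm}$ are genuine \emph{bounded} real-valued functions, not merely pointwise-finite ones, because it is exactly their uniform bound that becomes the constant $C$ of (i). (Pointwise finiteness alone would not suffice: with $f_n(k)$ alternating between $0$ and $1$ for $n<k$ and constant afterwards, each $\sum_n|f_{n+1}(k)-f_n(k)|=k$ is finite while the supremum over $k$ is not.) On the (i)$\Rightarrow$(ii) side the only non-formal input is the elementary observation that a convergent real sequence has a unique ultrafilter limit equal to its limit, which is where $f_0\equiv 0$ and the telescoping serve to identify $f$ explicitly; everything else is routine bookkeeping with absolutely convergent series.
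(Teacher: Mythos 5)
Your argument is correct and follows essentially the same route as the paper's: telescoping together with absolute convergence for (i)$\Rightarrow$(ii), and the pointwise identity $|a|=a^{+}+a^{-}$ for (ii)$\Rightarrow$(i). Your ``delicate point'' is well taken and is in fact handled more carefully than in the paper itself: the paper's proof of (ii)$\Rightarrow$(i) infers a uniform bound on each of $\sum_{n}(f_{n+1}-f_n)^{+}$ and $\sum_{n}(f_{n+1}-f_n)^{-}$ from the boundedness of $f$, which is a non sequitur (a bounded difference does not bound the two summands separately), and your alternating example shows that (ii) read with merely pointwise-finite sums genuinely fails to imply (i) --- so the equivalence must be read, as you read it, with the two series understood as bounded functions.
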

\begin{proof}
     (i)~$\Longrightarrow$~(ii): Note that $f_n=\sum_{k=0}^{n-1}(f_{k+1}-f_k)$ and by (i), for all $x$, the infinite series $\sum_{n<\omega}(f_{n+1}(x)-f_n(x))$ is absolutely convergent. Therefore, the sequence $f_n(x)$ converges to $\sum_{n<\omega}(f_{n+1}(x)-f_n(x))$, for all $x$. Let $f=\lim_n f_n$. Clearly, $f=\lim_nf_n=\lim_nf_n^+-\lim_nf_n^-$. (Note that $\lim_nf_n^+\leq \lim_n|f_n|\leq C$. Similarly for $\lim_nf_n^-$.)
     \newline (ii)~$\Longrightarrow$~(i): Suppose that there is $N>0$ such that for all $x$, $\sum_{n<\omega}(f_{n+1}-f_n)^+(x)<N$ and $\sum_{n<\omega}(f_{n+1}-f_n)^-(x)<N$. Therefore $$\sum_{n<\omega}|f_{n+1}(x)-f_n(x)|=\sum_{n<\omega}(f_{n+1}-f_n)^+(x)+\sum_{n<\omega}(f_{n+1}-f_n)^-(x)<2N.$$
\end{proof}

In the following, we give a definition for the metric spaces, but  we will shortly provide a general definition (i.e. without metrizability). The reason is that, in general, the space of types (Stone space) is not metrizable.\footnote{We study metrizable and non-metrizable separately because they have completely different behavior, so that the first one is almost tame, but the second one is very complicated and some aspects of it are still unknown.}

\begin{Definition} \label{subclasses}  {\em Let $X$ be a compact metric space. The
	following is a list of the spaces of real-valued functions defined
	on $X$. The important ones to note immediately are (iv) and (v).
	
	\noindent (i) $C(X)$ is the space of continuous real-valued
	functions on $X$.
	
		\noindent (ii) $SC(X)$, the space of  lower semi-continuous functions, is the space of functions $f:X\to\Bbb R$ such that for every  $r\in \Bbb R$, $[f\leq r]$ is  closed.

	\noindent (iii) $B_1(X)$, the  Baire 1 class, is the space of
	functions $f:X\to\Bbb R$ such that for every closed $F\subseteq \Bbb R$, $f^{-1}[F]$ is  $G_{\delta}$ in $X$, i.e. it is of the form $\bigcap_{n=1}^\infty G_n$ where the $G_{n}$'s are open.

	\noindent (iv)  $DBSC(X)$  is the space of real-valued functions
	$f$ on $X$ such that there are continuous functions $(f_n)_{n<\omega}$ (with $f_0\equiv 0$) and a constant $C>0$ such that $f_n\to f$ pointwise and 
	$\sum_{n<\omega}|f_{n+1}(x)-f_{n}(x)|<C$ for all $x$.

	\noindent (v)  $B_{1/2}(X)$ or Baire-1/2, is the space of
	real-valued functions $f$  on $X$ such that $f$ is the uniform
	limit of a sequence $(F_n)\subseteq DBSC(X)$.}
\end{Definition}

\begin{Fact} \label{DBSC fact}
 Let $X$ be a compact metric space.

	\noindent (i)  Every $f\in DBSC(X)$   is the difference of bounded
	semi-continuous functions; that is,  there are bounded semi-continuous functions $F_1,F_2$
	with $f=F_1-F_2$. 
	
		\noindent (ii)  Let $f$ be a bounded real-valued function on $X$. $f\in B_1(X)$ if and only if  it is the poinwise limit of a
	sequence of continuous functions.

    \noindent(iii)  A real-valued function $f$ on
  $X$ is lower semi-continuous if and only
if there is a sequence $(f_n)$ of continuous functions such that
$f_1\leq f_2\leq \cdots$ and $(f_n)$ converges pointwise to $f$
(for short we write $f_n\nearrow f$).

		 \noindent (iv)  Let $f$ be a bounded real-valued function on $X$.  If there are bounded semi-continuous functions $F_1,F_2$ with $f=F_1-F_2$, then  $f\in DBSC(X)$. Therefore, by (i),  $f\in DBSC(X)$ if and only if $f$   is the difference of bounded
		 semi-continuous functions.
		 \newline
		 (v)  For any simple\footnote{A function is called {\em simple} if its range is
		 	a finite set.}  function $f$ on $X$,  $f\in B_{1/2}(X)$ if and only if it is $DBSC$.
		 \newline
		 (vi) For uncountable
		 compact metric space $X$,
		 $$C(X)\subsetneqq SD(X)\subsetneqq DBSC(X)\subsetneqq B_{1/2}(X)\subsetneqq  B_1(X).$$
\end{Fact}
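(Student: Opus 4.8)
The plan is to treat (ii) and (iii) as the classical facts they are and to derive (i), (iv), (v) from Fact~\ref{0} together with (iii), leaving (vi) as bookkeeping plus standard separating examples. For (iii): the direction $\Leftarrow$ is immediate, since if $f_n\nearrow f$ with each $f_n$ continuous then $[f>r]=\bigcup_n[f_n>r]$ is open, hence $[f\le r]$ is closed; for $\Rightarrow$ I would use the Moreau--Yosida inf-convolution -- a real-valued lower semi-continuous $f$ on the compact metric $X$ is bounded below, so $f_n(x):=\inf_{y\in X}\bigl(f(y)+n\,d(x,y)\bigr)$ defines an $n$-Lipschitz nondecreasing sequence of continuous functions with $f_n\nearrow f$. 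For (ii) I would simply cite the Baire / Lebesgue--Hausdorff theorem, which on metrizable spaces identifies the Baire-class-1 functions with the pointwise limits of continuous functions (here specialized to bounded functions on the compact metric $X$), rather than reproduce its somewhat delicate proof.

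For (i) -- and hence the nontrivial direction of (iv), in which ``semi-continuous'' is to be read as \emph{bounded} semi-continuous, as the ``B'' in $DBSC$ signals -- take $f\in DBSC(X)$ with witnessing continuous $(f_n)$ and bound $C$. Then the implication (i)$\Rightarrow$(ii) of Fact~\ref{0} already writes $f=g^{+}-g^{-}$ with $g^{\pm}=\sum_{n<\omega}(f_{n+1}-f_n)^{\pm}$, and each of $g^{\pm}$ is the pointwise supremum of its partial sums, which are continuous, nondecreasing, and bounded above by $C$; so by (iii) both $g^{+}$ and $g^{-}$ are bounded lower semi-continuous, which is (i). For the converse of (iv): given $f=F_1-F_2$ with $F_1,F_2$ bounded semi-continuous, use (iii) to choose continuous $g_n\nearrow F_1$, $h_n\nearrow F_2$, put $f_0=0$ and $f_{n+1}=g_n-h_n$; then $f_n\to f$ pointwise, and since $g_{n+1}-g_n\ge0$ and $h_{n+1}-h_n\ge0$ we get
\[
\sum_{n}\bigl|f_{n+1}(x)-f_n(x)\bigr|\ \le\ |g_0(x)-h_0(x)|+\sum_{n}\bigl(g_{n+1}(x)-g_n(x)\bigr)+\sum_{n}\bigl(h_{n+1}(x)-h_n(x)\bigr),
\]
where the two telescoping series equal $F_1(x)-g_0(x)$ and $F_2(x)-h_0(x)$, which are uniformly bounded on the compact $X$; hence $f\in DBSC(X)$.

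For (v), the inclusion $DBSC(X)\subseteq B_{1/2}(X)$ is trivial (take the constant sequence), so the content is ``$f$ simple and $f\in B_{1/2}(X)$ $\Rightarrow$ $f\in DBSC(X)$''. Writing $\mathrm{ran}(f)=\{c_1,\dots,c_m\}$ with $m\ge2$, set $\delta:=\tfrac13\min_{i\ne j}|c_i-c_j|>0$ and pick $g\in DBSC(X)$ with $\|f-g\|_\infty<\delta$. Let $\psi:\R\to\R$ be the continuous, piecewise-linear (hence Lipschitz) map equal to $c_i$ on each interval $[c_i-\delta,c_i+\delta]$ (these are pairwise disjoint), interpolating linearly between consecutive ones and constant beyond the outermost; then $\psi\circ g=f$, because $g(x)$ lies within $\delta$ of $f(x)$. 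Finally, $DBSC(X)$ is closed under post-composition with Lipschitz maps: if $(g_n)$ witnesses $g$ with bound $C$ and $\psi$ is $L$-Lipschitz, then $(\psi\circ g_n)$ is continuous, converges pointwise to $\psi\circ g$, and $\sum_n|\psi(g_{n+1}(x))-\psi(g_n(x))|\le LC$ for all $x$ (prepending a zero to restore $f_0=0$ adds only the constant $|\psi(g_0(\cdot))|$). Hence $f=\psi\circ g\in DBSC(X)$. This composition step is the only point that needs a small idea, and is what I expect to be the crux of (v).

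For (vi), the inclusions $C(X)\subseteq SD(X)\subseteq DBSC(X)\subseteq B_{1/2}(X)\subseteq B_1(X)$ are all immediate from the relevant definitions -- the second via (iv), the third by the triviality above, and the last because a uniform limit of Baire-1 functions is Baire-1. The real content is \emph{strictness}, and this is where uncountability of $X$ is essential: on a countable compact metric space, e.g.\ $\omega+1$, the whole chain collapses, since for any bounded $f$ the functions $g_k$ that agree with $f$ on $\{0,\dots,k\}$ and equal $f(\omega)$ elsewhere are continuous and satisfy $\sum_k|g_{k+1}-g_k|\le3\|f\|_\infty$, so every bounded function already lies in $DBSC(\omega+1)$. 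For uncountable $X$ one fixes a homeomorphic copy of the Cantor set inside $X$ and pulls back the standard separating examples (note that by (v) a function separating $DBSC$ from $B_{1/2}$ must be non-simple); I would cite \cite{HOR} and \cite{Ros} for these rather than reconstruct them here.
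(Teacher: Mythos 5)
Your proposal is correct, and for part (i) it is essentially the paper's own argument: the paper sets $F_1=\sum(f_{n+1}-f_n)^+$ and $F_2=\sum(f_{n+1}-f_n)^-$ and invokes Fact~\ref{0}, exactly as you do. The real difference is one of self-containedness: the paper discharges (ii)--(vi) almost entirely by citation ((ii) to Lebesgue--Hausdorff via \cite[Theorem~24.10]{Kechris}, (iii) to Baire via \cite{H}, (iv) to \cite{HOR}, (v) to \cite[Proposition~2.2]{CMR}, and (vi) to \cite[Proposition~5.1]{HOR} and \cite{CMR}), whereas you supply direct proofs for (iii), (iv) and (v). Your Moreau--Yosida approximation for (iii) and your telescoping estimate for the converse of (iv) are precisely the standard arguments hiding behind those citations, so nothing is lost there. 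The genuinely independent piece is (v): instead of quoting \cite{CMR} you approximate the simple $f$ to within $\delta=\tfrac{1}{3}\min_{i\neq j}|c_i-c_j|$ by some $g\in DBSC(X)$, retract onto $\mathrm{ran}(f)$ by a Lipschitz map $\psi$ satisfying $\psi\circ g=f$, and use that $DBSC$ is stable under post-composition with Lipschitz maps (the witnessing sum $\sum_n|\psi(g_{n+1})-\psi(g_n)|$ is controlled by $L$ times the original bound, plus the bounded initial term once a zero is prepended); this is a clean and correct replacement for the citation, and arguably more informative. Your observations that ``semi-continuous'' in (iv) must be read as \emph{bounded} semi-continuous, and that in (vi) only the strictness of the inclusions carries content --- for which one still imports the separating examples of \cite{HOR} and \cite{Ros} on a Cantor subset of $X$, the class $SD(X)$ being undefined in the paper and best read off from \cite{HOR} --- are both accurate and are left implicit in the paper. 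I see no gaps.
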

\begin{proof}
(i): Set $F_1=\sum\big(f_{n+1}(x)-f_{n}(x)\big)^+$ and 
$F_2=\sum\big(f_{n+1}(x)-f_{n}(x)\big)^-$ and use Fact~\ref{0}. (Note that $F_1,F_2$ are lower semi-continuous and bounded. See also (iii).)

\noindent (ii) This is a classical result due to Lebesgue and Hausdorff (see \cite[Theorem~24.10]{Kechris}.) Note that  the limit of any sequence of continuous functions   is Baire 1 in any topological space. For a generalization of (ii), 
see \cite[p.~ 393]{Kuratowski} and also \cite{Jayne}.

\noindent (iii)  is a classical result in functional analysis, essentially due to Baire (cf.  \cite[p. 274]{H}).

\noindent (iv) follows from (iii). Indeed, suppose that $f=F_1-F_2$ and $F_1,F_2$ are bounded lower semi-continuous. By (iii), there are continuous functions $h_n,g_n$ such that $h_n\nearrow F_1$ and $g_n\nearrow F_2$. Therefore, $f_n:=(h_n-g_n)\to f$ and $\sum|f_{n+1}-f_n|\leqslant \sum|h_{n+1}-h_n|+\sum|g_{n+1}-g_n|<C$ for some $C>0$.
(See also \cite{HOR}.)

	\noindent (v) follows from \cite[Proposition~2.2]{CMR}.
	
	\noindent (vi) follows from  \cite[Proposition~5.1]{HOR} and \cite{CMR}.
\end{proof}

Note that the difference between the given classes $B_{1/2}$ and $DBSC$ is fundamental, and in fact there are many other classes between them (cf. \cite{HOR}).

\begin{Fact} \label{DBSC=strong} Let $X$ be a compact metric space and $f$   a simple function on $X$. $f\in DBSC(X)$ iff it is strongly Borel, that is, there exist  disjoint differences of closed sets $W_1,\ldots,W_n$ and real numbers $r_1,\ldots,r_n$ such that $f=\sum_{i=1}^n r_i\chi_{W_i}$.
\end{Fact}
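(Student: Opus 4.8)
The plan is to prove the two implications separately, in each reducing to the special case of an indicator function $\chi_A$, and for the harder direction to a purely topological statement about $A$.

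For the direction ``strongly Borel $\Rightarrow DBSC$'': if $W=F_1\setminus F_2$ with $F_1,F_2$ closed, then $\chi_W=\chi_{F_1}-\chi_{F_1\cap F_2}=\chi_{X\setminus(F_1\cap F_2)}-\chi_{X\setminus F_1}$ is a difference of lower semi-continuous functions, so $\chi_W\in DBSC(X)$ by Fact~\ref{DBSC fact}(iv); and $DBSC(X)$ is a vector space, directly from Definition~\ref{subclasses}(iv) --- scaling and adding the approximating sequences scales and adds the bounds $\sum_n|f_{n+1}-f_n|$ --- so every finite linear combination $\sum_i r_i\chi_{W_i}$ lies in $DBSC(X)$. (The disjointness of the $W_i$ is not needed for this direction.)

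For ``$DBSC\Rightarrow$ strongly Borel'' I would proceed in three steps. First, note that $DBSC(X)$ is a vector lattice containing the constants: since $t\mapsto|t|$ is $1$-Lipschitz, $|f|\in DBSC(X)$ whenever $f\in DBSC(X)$, hence $DBSC(X)$ is closed under $\max$ and $\min$. Second, reduce to indicators: if $f\in DBSC(X)$ is simple with values $a_1<\dots<a_m$ and $A_j:=[f=a_j]$, then for any real $a$ the functions $\min(1,k(f-a)^+)$ lie in $DBSC(X)$ for every $k$ and, since $f$ has only finitely many values, coincide with $\chi_{[f>a]}$ once $k$ is large; hence $\chi_{[f>a]}\in DBSC(X)$, and therefore each $\chi_{A_j}=\chi_{[f\ge a_j]}-\chi_{[f>a_j]}$ (with $[f\ge a_j]=[f>a_{j-1}]$, and $=X$ for $j=1$) lies in $DBSC(X)$. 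This reduces the Fact to the single assertion: $\chi_A\in DBSC(X)$ if and only if $A$ belongs to the Boolean algebra $\mathcal A$ of subsets of $X$ generated by the closed sets. Third, granting the nontrivial half of this assertion, conclude by an ``atoms'' argument: any $B\in\mathcal A$ is a Boolean combination of finitely many closed sets $F_1,\dots,F_N$, hence the disjoint union of those atoms $\bigcap_{i\in S}F_i\setminus\bigcup_{i\notin S}F_i$ ($S\subseteq\{1,\dots,N\}$) of the finite algebra they generate that lie in $B$, and each such atom is a difference of closed sets; applying this to every $A_j$ and using that the $A_j$ are pairwise disjoint produces pairwise disjoint differences of closed sets $W_{j,k}$ with $f=\sum_{j,k}a_j\chi_{W_{j,k}}$, exactly the asserted form. (Conversely, if $f$ is strongly Borel then each $A_j$ is a Boolean combination of the $W_i$, so $A_j\in\mathcal A$, and the reduction is genuinely an equivalence.)

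The main obstacle is the remaining implication for indicators, $\chi_A\in DBSC(X)\Rightarrow A\in\mathcal A$. The difficulty is that a pointwise-convergent sequence of continuous functions --- or of indicators of closed sets --- approximating $\chi_A$ may oscillate arbitrarily often near the threshold, hence need not have bounded \emph{set}-variation, so one cannot read a finite Boolean description of $A$ off it; indeed $\chi_A$ may lie in $B_1(X)$ without lying in $DBSC(X)$. I would get past this by invoking the finer structure theory of $DBSC$ --- equivalently, for simple functions, of $B_{1/2}$ --- functions developed in \cite{HOR} and \cite{CMR}: the hypothesis $\chi_A\in DBSC(X)$ forces the following derivative process to terminate in finitely many steps --- put $D_0(A)=X$, and let $D_{n+1}(A)$ be the boundary of $A\cap D_n(A)$ taken relative to $D_n(A)$ --- and finite termination of this process is precisely the statement that $A$ lies in $\mathcal A$ (the finite levels of the difference hierarchy over the closed sets). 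The finiteness of this rank is exactly where the $DBSC$ hypothesis, rather than mere membership in $B_1(X)$, is consumed; I expect essentially all the weight of the Fact to rest there, the remainder being the lattice bookkeeping above and elementary manipulations with Boolean combinations of closed sets. One may alternatively just quote this last point from \cite{CMR}, in the spirit of Fact~\ref{DBSC fact}(v).
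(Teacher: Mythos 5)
Your proposal is correct, but it reaches the statement by a genuinely different route than the paper. The paper's proof is essentially a citation of the equivalence (3)$\Leftrightarrow$(4) of \cite[Proposition~2.2]{CMR}, fleshed out by a direct sketch of the hard direction: write $f=F_1-F_2$ with $F_1,F_2$ lower semi-continuous via Fact~\ref{DBSC fact}(iv), and read off $f^{-1}(1)$ as a finite union of sets of the form (open)$\,\cap\,$(closed) using the boundedness of $F_1,F_2$ (note that this sketch tacitly uses that $F_1,F_2$ can be taken to have finitely many, in fact integer, values, which is more than Fact~\ref{DBSC fact}(iv) literally provides --- so the paper, too, ultimately rests its weight on the \cite{CMR} citation). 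You instead first establish that $DBSC(X)$ is a vector lattice containing the constants, use this to reduce the simple function $f$ to the indicators of its level sets, and then isolate the purely topological crux --- $\chi_A\in DBSC(X)$ iff $A$ lies in the algebra generated by the closed sets --- which you resolve via the finite oscillation/derivative rank of \cite{HOR} and \cite{CMR}; your atoms argument at the end correctly produces the required \emph{disjoint} differences of closed sets, a point the paper glosses over. Your easy direction (indicator of a difference of closed sets as a difference of two indicators of open sets, plus linearity) is a clean expansion of the paper's ``the converse is evident.'' What your route buys is a sharp separation between soft lattice bookkeeping and the one genuinely hard implication, stated in a form (finite Baire index $\Leftrightarrow$ constructibility) that is robust and reusable; what the paper's route buys is brevity, extracting the Boolean decomposition in one step from the semicontinuous splitting. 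Since both arguments defer the same non-trivial implication to \cite[Proposition~2.2]{CMR}, your proposal is at the same level of completeness as the paper's own proof.
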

\begin{proof} This is equivalence (3)~$\Longleftrightarrow$~(4) of \cite[Proposition~2.2]{CMR}.
\end{proof}

We define a norm on $DBSC$.

\begin{Definition} 
	{\em Let $X$ be a compact metric space.
		\newline  Suppose that $f\in DBSC(X)$. The $D$-norm of $f$, denoted by $\|f\|_D$, is defined by $$\|f\|_D=\inf\{C:\exists(f_n)\subseteq C(X), \   f_0\equiv 0, \ f_n\to f \text{ pointwise}, \ \sum|f_{n+1}-f_n|\leq C\}.$$}
\end{Definition}

\begin{Fact}[\cite{HOR}, page 3]   \label{properties}  Let $X$ be a compact metric space.     The classes $DBSC$ and $B_{1/2}$
	 are Banach algebras with respect to the $D$-norm and the uniform norm, respectively.
\end{Fact}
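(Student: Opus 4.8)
The plan is to treat the two algebras separately, with the $B_{1/2}(X)$ assertion a short consequence of the $DBSC(X)$ one. For $B_{1/2}(X)$: every $f\in DBSC(X)$ is bounded, because $\|f\|_\infty\le\|f\|_D$ (see below), so $DBSC(X)$ is a subalgebra of the Banach algebra $\ell^\infty(X)$ of bounded real-valued functions on $X$ under the sup-norm --- closure of $DBSC(X)$ under products is part of the argument below. Since multiplication is jointly $\|\cdot\|_\infty$-continuous on norm-bounded sets, the uniform closure of a subalgebra is again a subalgebra; and by Definition~\ref{subclasses}(v) this uniform closure is precisely $B_{1/2}(X)$. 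Hence $B_{1/2}(X)$ is a closed subalgebra of $\ell^\infty(X)$, so a Banach algebra for $\|\cdot\|_\infty$. The real work is then in $(DBSC(X),\|\cdot\|_D)$, where I would verify, in order: (a) $\|\cdot\|_D$ is a norm; (b) $DBSC(X)$ is closed under products with $\|fg\|_D\le\|f\|_D\|g\|_D$ (and $\|1\|_D=1$); (c) $(DBSC(X),\|\cdot\|_D)$ is complete.

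For (a) and (b) the pivotal elementary fact is $\|f\|_\infty\le\|f\|_D$: if $(f_n)\subseteq C(X)$ with $f_0=0$, $f_n\to f$ pointwise and $\sum_n|f_{n+1}(x)-f_n(x)|\le C$ for all $x$, then $|f(x)|=\big|\sum_n(f_{n+1}(x)-f_n(x))\big|\le C$. This gives definiteness; homogeneity and the triangle inequality follow from scaling and from termwise addition of approximating sequences, and finiteness of $\|\cdot\|_D$ on $DBSC(X)$ is the definition. For products, I would write $f=\sum_n a_n$, $g=\sum_m b_m$ with $a_n=f_{n+1}-f_n$, $b_m=g_{m+1}-g_m$ for approximating sequences satisfying $\sum_n|a_n(x)|\le C$ and $\sum_m|b_m(x)|\le C'$, fix an enumeration of the countable family $\{a_nb_m\}_{n,m}$, and let $(\varphi_j)$ be the partial sums of that series prefixed by $\varphi_0=0$. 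Each $\varphi_j$ is continuous; at each $x$ the double series $\sum_{n,m}a_n(x)b_m(x)$ converges absolutely with sum $f(x)g(x)$, so $\varphi_j\to fg$ pointwise; and $\sum_j|\varphi_{j+1}(x)-\varphi_j(x)|=\big(\sum_n|a_n(x)|\big)\big(\sum_m|b_m(x)|\big)\le CC'$. Thus $fg\in DBSC(X)$ and $\|fg\|_D\le\|f\|_D\|g\|_D$; and the sequence $0,1,1,\dots$ witnesses $\|1\|_D=1$, so $DBSC(X)$ is a unital normed algebra.

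Step (c), completeness, carries the real content. Given a $\|\cdot\|_D$-Cauchy sequence, pass to a subsequence $(f^{(k)})$ with $\|f^{(k+1)}-f^{(k)}\|_D<2^{-k}$; since $\|\cdot\|_\infty\le\|\cdot\|_D$ the telescoping series $f^{(1)}+\sum_{k\ge1}d_k$, with $d_k:=f^{(k+1)}-f^{(k)}$, converges uniformly to a bounded $f$, and $f^{(k)}\to f$ uniformly. To see $f\in DBSC(X)$ and $\|f^{(k)}-f\|_D\to0$, fix continuous approximating sequences: $(g_j)$ for $f^{(1)}$ of total variation $\le\|f^{(1)}\|_D+1$, and for each $k$ one for $d_k$, with terms $h^k_0=0,h^k_1,h^k_2,\dots$, of total variation $<2^{-k}$ (hence each term has sup-norm $<2^{-k}$). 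I would then assemble a single continuous sequence converging pointwise to $f$ by a \emph{diagonal} interleaving whose value at stage $j$ is $g_j+\sum_{i\le k_j}h^i_j$ for some $k_j\uparrow\infty$ chosen to grow slowly. Its total variation is finite --- bounded by $\|f^{(1)}\|_D+1$ plus $\sum_{k\ge1}2^{-k+1}$, accounting respectively for within-block increments and for the one-time cost of first introducing block $k$ --- so $f\in DBSC(X)$; running the same construction using only the blocks $k>m$ gives $\|f-f^{(m+1)}\|_D\le 2^{-m+1}$, and hence the original Cauchy sequence converges.

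The only subtle point, and the main obstacle in writing this out, is exactly this interleaving: a naive concatenation --- run $(g_j)$ ``to the end'', then switch to $(h^1_j)$, and so on --- fails because each approximating sequence converges only pointwise, not uniformly, so no finite truncation of $(g_j)$ is uniformly close to $f^{(1)}$. The diagonal interleaving repairs this, and checking pointwise convergence to $f$ reduces to showing that the distance from the stage-$j$ value to $f(x)$, which is at most $|g_j(x)-f^{(1)}(x)|+\sum_{i\le k_j}|h^i_j(x)-d_i(x)|+\sum_{i>k_j}|d_i(x)|$, tends to $0$ --- the middle term by a dominated-convergence argument for $\sum_i d_i$, using $|h^i_j(x)-d_i(x)|\le 2^{-i+1}$ uniformly in $j$. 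This is the argument indicated in \cite[p.~3]{HOR}; everything else is routine.
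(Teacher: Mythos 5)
Your proof is correct. Note that the paper itself offers no argument for this Fact --- it is stated with a bare citation to \cite[p.~3]{HOR} --- so there is nothing to compare against except the source you are reconstructing; your write-up is essentially the standard HOR argument, filled in. The three load-bearing points are all handled properly: the inequality $\|f\|_\infty\le\|f\|_D$ (which needs the paper's convention $f_0\equiv 0$, otherwise constant sequences would give $\|1\|_D=0$ and $\|\cdot\|_D$ would fail definiteness); submultiplicativity via an enumeration of the absolutely convergent double series $\sum_{n,m}a_nb_m$, whose partial sums have pointwise variation exactly $\bigl(\sum_n|a_n(x)|\bigr)\bigl(\sum_m|b_m(x)|\bigr)\le CC'$; and completeness via the diagonal interleaving $g_j+\sum_{i\le k_j}h^i_j$, where you correctly identify that naive concatenation fails for lack of uniform convergence and that the variation budget splits into within-block increments ($<2^{-k}$ per block) plus the one-time introduction cost $|h^k_{j}(x)|<2^{-k}$. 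The $B_{1/2}$ half then follows since, by Definition~\ref{subclasses}(v), $B_{1/2}(X)$ is the uniform closure of $DBSC(X)$ inside the Banach algebra of bounded functions, and the uniform closure of a subalgebra of bounded functions is again a subalgebra. No gaps.
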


We now generalize Definition~\ref{subclasses} for arbitrary compact spaces.

\begin{Definition} \label{generalized subclasses}  {\em Let $X$ be a compact Hausdorff space (not necessarily metric).

		\noindent (i) A simple function $f$ on $X$ is called   a {\em generalized $DBSC$ function} (or short {\em generalized $DBSC$}) if there exist  disjoint differences of closed sets $W_1,\ldots,W_n$ and real numbers $r_1,\ldots,r_n$ such that $f=\sum_{i=1}^n r_i\chi_{W_i}$.

		\noindent (ii)  A real-valued functions $f$  on $X$ is called   {\em generalized Baire-1/2}  if   $f$ is the uniform
		limit of a sequence $(F_n)$ of simple $DBSC$ functions.}
\end{Definition}

Similar to Definition~\ref{subclasses}(iii) above, in the general case, i.e. when $X$ is not metric space, a function $f:X\to\Bbb R$ is 
called Baire~1 if for every closed $F\subseteq \Bbb R$, $f^{-1}[F]$ is  $G_{\delta}$ in $X$. It is well-known that for non-metric spaces, a Baire~1 function is not necessarily the limit of a sequence of continuous functions. For the same reason, a generalized $DBSC$ (or generalized Baire-1/2) function  is not necessarily the limit of a sequence of continuous functions.
These are topics related to to descriptive set theory.

\section{Types and measures}
In this section we give   model-theoretic results using ideas from functional analysis as presented in the previous section.

We work in the classical  ($\{0,1\}$-valued) model theory context.
 Our model theory notation is standard, and a text such
as \cite{Simon} will be sufficient background for the model
theory part of the paper. 

We fix an $L$-formula $\phi(x,y)$, a complete   $L$-theory $T$, the monster model $\cal U$, and
a subset $A$ of $\cal U$. We let $\tilde\phi(y, x)
= \phi(x, y)$. Let $X=S_{\tilde{\phi}}(A)$ be the space of
complete $\tilde{\phi}$-types on $A$, namely the Stone space of
ultrafilters on the Boolean algebra generated by formulas $\phi(a,y)$
for $a\in A$.  Each formula $\phi(a,y)$ for  $a\in A$ defines a
function $\phi(a,y):X\to\{0,1\}$, which takes $q\in X$ to 1 if
$\phi(a,y)\in q$ and to 0 if $\phi(a,y)\notin q$. Note that $X$
is compact and these functions are {\em continuous}, and as $\phi$
is fixed we can identify this set of functions with $A$. So, $A$
is a subset of all bounded continuous functions on $X$, denoted by
$A\subseteq C(X)$.  Just as we did above, one can define $B_1(X)$, Baire-1/2
and $DBSC(X)$.

\begin{Convention} (i): In  this paper, a type over  $\cal U$   is called a ``global type",   a type over a small set $A$ is called a ``non-global type", a consistent collection of Boolean combinations of the instances $\phi(x,b), b\in\cal U$, is called a ``global $\phi$-type", and a consistent collection of Boolean combinations of the instances $\phi(x,b), b\in A$, is called a ``non-global $\phi$-type."
\newline
(ii): In this paper, whenever $\mu$ is a non-global measure/$\phi$-measure over a small set $B$, we assume that $\mu$ is $A$-invariant for some $A\subseteq B$ such that every type over $A$ is realized in $B$. This makes some definitions consistent. Similarly for non-global  types/$\phi$-types.
\end{Convention}

\begin{Definition}[Keisler measures]
{\em  Let $T$ be a complete theory, $\cal U$ the monster model of $T$, and $A\subseteq\cal U$. \newline (i) A (Keisler) measure $\mu$ over $A$ in the variable $x$ is a finitely additive probability measure on the algebra ${\cal L}(A)$ of $A$-definable sets in the variables $x$.  We will sometimes  write $\mu$ as $\mu(x)$ to emphasize that $\mu$ is a measure
on the variable $x$. We say $\mu$ is a global (Keisler) measure,  if $A=\cal U$, and is  non-global (Keisler) measure, in otherwise. 
 \newline (ii) Let $\phi(x,y)$ be a formula. A (Keisler) $\phi$-measure $\mu$ over $A$ in the variable $x$ is a finitely additive probability measure on the algebra ${\cal L}_\phi(A)$ generated by definable sets $\phi(x,b), b\in A$.
  We will sometimes write $\mu$ as   $\mu_\phi$ (or $\mu_\phi(x)$) to emphasize that $\mu$ is a $\phi$-measure
 (on the variable $x$).  A (Keisler) $\phi$-measure $\mu$ is a global (Keisler) $\phi$-measure,  if $A=\cal U$, and is  non-global (Keisler) $\phi$-measure,  otherwise.}
\end{Definition}

 We let  $S(\cal U)$ be the space of global types, and  ${\frak M}({\cal U})$ the collection  of global Keisler measures. We let  $S_\phi(\cal U)$ be the space of global $\phi$-types, and  ${\frak M}_\phi({\cal U})$ the collection  of global Keisler $\phi$-measures. Similarly, for $A\subseteq \cal U$, the symbols $S(A)$, ${\frak M}({A})$, $S_\phi(A)$ and ${\frak M}_\phi(A)$ are defined.
 
For a formula $\phi(x,y)$, we let $\Delta_\phi$ be the set of all Boolean combinations of the instances of $\phi(x,y)$ (with the variable $x$). A formula  in $\Delta_\phi$ is called a $\phi$-formula. If $\mu\in {\frak M}({A})$, we also define $\mu|_\phi(\theta(x,b)):=\mu(\theta(x,b))$ for all $\theta\in\Delta_\phi$ and $b\in A$. Note that $\mu|_\phi\in {\frak M}_\phi({A})$ and so we will sometimes write $\mu|_\phi$ as $\mu_\phi$ (or $\mu$ again).

\begin{Definition} \label{fs-measure}
	{\em Let $A,B$ be two small sets (of $\cal U$) and $\mu\in{\frak M}(A)$ (resp. $\mu\in{\frak M}_\phi(A)$). We say that $\mu$ is {\em finitely satisfiable} in $B$ if  for every $L(A)$-formula (resp. $\phi$-formula with parameters in $A$) $\theta(x)$  such that $\mu(\theta(x))>0$, there exists $a\in B$ such that $\models \theta(a)$.}
\end{Definition}
\begin{Remark}
Recall that the notion of {\em finite satisfiability} can be expressed in topological terms. Indeed, $p\in S(A)$ is finitely satisfiable in $B$ if there are $(b_i)$ in $B$ such that $tp(b_i/A)\to p$ in the logic topology.
A similar topological representation holds for measures, which will be presented in Fact~\ref{Dirac}(iv),(v) below. 
\end{Remark}

\medskip
In the following, we  give a useful dictionary that is used in the rest of the paper and can also   be used in future work.

\begin{Fact} \label{Dirac} Let $T$ be a complete theory, $M$ a small set/model  and $\phi(x,y)$ a formula.
	\newline
	(i)(Pillay) There is a correspondence between global $M$-finitely satisfiable $\phi$-types $p(x)$ and the functions in the pointwise closure of   all functions $\phi(a,y):S_{\tilde{\phi}}(M)\to \{0,1\}$ for $a\in M$, where  $\phi(a,q)=1$ if and only if $\phi(a,y)\in q$.
	\newline
	(ii) The map $p\mapsto\delta_p$ is a correspondence   between global  $\phi$-types $p(x)$  and  Dirac measures $\delta_p(x)$ on $S_{\phi}({\cal U})$, where  $\delta_p({\cal B})=1$ if $p\in{\cal B}$, and equals 0   otherwise. Moreover,    $p(x)$ is finitely satisfiable in $M$ iff $\delta_p(x)$ is finitely satisfiable in $M$.
	\newline
	(iii) There is a correspondence between global  $\phi$-measures $\mu(x)$ and  regular Borel probability measures on $S_{\phi}({\cal U})$. Moreover, a  global  $\phi$-measures is finitely satisfiable in $M$  iff its corresponding regular Borel probability measure is finitely satisfiale in $M$.
	\newline
	(iv) The closed convex hull of Dirac measures  $\delta(x)$  on $S_{\phi}({\cal U})$ is exactly all regular Borel probability measures $\mu(x)$ on $S_{\phi}({\cal U})$. Moreover,
	 the closed convex hull of Dirac measures on $S_{\phi}({\cal U})$ which are finitely satisfiable in $M$   is exactly all regular Borel probability measures $\mu(x)$ on $S_{\phi}({\cal U})$ which are  finitely satisfiable in $M$. 
	\newline
	(v) There is a correspondence between global $M$-finitely satisfiable $\phi$-measures $\mu(x)$ and the functions in the pointwise closure of   all  functions of the form $\frac{1}{n}\sum_1^n \theta(a_i,\bar y)$ on 
	$S_{\bar y}(M)$, where $\theta\in\Delta_\phi$, $a_i\in M$,  and  $\theta(a_i,q)=1$ if and only if $\theta(a_i,\bar y)\in q$.
\end{Fact}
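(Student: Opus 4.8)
The plan is to dispose of the ``measure-side'' items (ii)--(iv) with Stone duality and Krein--Milman, and to put the real work into (i) and (v).

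\textbf{(ii)--(iv).} For (ii): the map $p\mapsto\delta_p$ is a bijection between points of $S_\phi(\mathcal U)$ and Dirac measures in $\mathfrak M_\phi(\mathcal U)$, simply because a $\{0,1\}$-valued finitely additive probability measure on $\Delta_\phi$ is the same thing as an ultrafilter on $\Delta_\phi$; and since $\delta_p(\psi)>0\iff\psi\in p$ for any $\phi$-formula $\psi$, and $p$ is closed under finite conjunction, $\delta_p$ is finitely satisfiable in $M$ iff every member of $p$ is realised in $M$, i.e.\ iff $p$ is. For (iii): I would invoke the standard correspondence between finitely additive probability measures on a Boolean algebra $\mathcal B$ and regular Borel probability measures on its Stone space $\mathrm{St}(\mathcal B)$ (extend off the clopens by regularity; restrict to clopens to invert), which with $\mathcal B=\Delta_\phi$, $\mathrm{St}(\mathcal B)=S_\phi(\mathcal U)$ gives the asserted bijection $\mu\leftrightarrow\hat\mu$ with $\hat\mu([\psi])=\mu(\psi)$. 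Writing $K=\overline{\{\mathrm{tp}_\phi(a/\mathcal U):a\in M\}}\subseteq S_\phi(\mathcal U)$, one then checks that $\mu$ is finitely satisfiable in $M$ iff $\hat\mu(K)=1$: if $\hat\mu(K)<1$, regularity yields a clopen $[\psi]$ disjoint from $K$ with $\hat\mu([\psi])>0$, so $\not\models\psi(a)$ for all $a\in M$; conversely if $\hat\mu([\psi])>0$ and $\hat\mu(K)=1$ then $[\psi]\cap K\neq\emptyset$, and openness of $[\psi]$ gives $a\in M$ with $\models\psi(a)$. For (iv): the regular Borel probability measures supported on a fixed closed subset of $S_\phi(\mathcal U)$ form a weak$^{*}$-compact convex set whose extreme points are the Diracs at its points, so Krein--Milman applied to $S_\phi(\mathcal U)$ and to $K$ gives the two assertions, using (ii)--(iii) to read ``finitely satisfiable in $M$'' as ``supported on $K$''.

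\textbf{(i).} Here lies the content, and I expect the only genuine obstacle: the \emph{well-definedness} of the function attached to a type. Fix a global $\phi$-type $p$ finitely satisfiable in $M$ and set $f_p\colon S_{\tilde\phi}(M)\to\{0,1\}$, $f_p(q)=\mathbf 1[\phi(x,b)\in p]$ for $b\models q$. This does not depend on the choice of $b$: if $b\equiv_{\tilde\phi,M}b'$ but, say, $\phi(x,b)\wedge\neg\phi(x,b')\in p$, then finite satisfiability of $p$ produces $a\in M$ with $\models\phi(a,b)\wedge\neg\phi(a,b')$, contradicting $\mathrm{tp}_{\tilde\phi}(b/M)=\mathrm{tp}_{\tilde\phi}(b'/M)$ --- and this is precisely where finite satisfiability is indispensable. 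Since $p$ finitely satisfiable in $M$ means $p\in K$, we may write $p=\lim_{\mathcal D}\mathrm{tp}_\phi(a_i/\mathcal U)$ for a net $(a_i)$ in $M$ and an ultrafilter $\mathcal D$; as $[\phi(x,b)]$ is clopen in $S_\phi(\mathcal U)$ this gives $f_p(q)=\lim_{\mathcal D}\phi(a_i,q)$ for every $q$, so $f_p$ lies in the pointwise closure of $\{\phi(a,\cdot):a\in M\}$, and conversely any $f=\lim_{\mathcal D}\phi(a_i,\cdot)$ in that closure equals $f_p$ for $p=\lim_{\mathcal D}\mathrm{tp}_\phi(a_i/\mathcal U)$. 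The assignment $p\mapsto f_p$ is injective, since $\phi(x,b)\in p\iff f_p(\mathrm{tp}_{\tilde\phi}(b/M))=1$ recovers $p$, and continuous, since the preimage of a subbasic clopen $\{f:f(q)=\varepsilon\}$ is the clopen $\{p:\mathbf 1[\phi(x,b)\in p]=\varepsilon\}$ (for $b\models q$); being a continuous bijection between compact Hausdorff spaces it is a homeomorphism.

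\textbf{(v).} The argument is parallel, ranging over $\theta\in\Delta_\phi$ and using averages of Diracs. By (iv), (ii), and the density of $\{\mathrm{tp}_\phi(a/\mathcal U):a\in M\}$ in $K$, a global $\phi$-measure $\mu$ finitely satisfiable in $M$ is a weak$^{*}$-limit of averages $\frac1n\sum_{i\le n}\delta_{\mathrm{tp}_\phi(a_i/\mathcal U)}$ with $a_i\in M$; such an average evaluated at $\theta(x,b)$ returns $\frac1n\sum_{i\le n}\theta(a_i,q)$, where $q=\mathrm{tp}_{\tilde\theta}(b/M)$. Running the separation argument of (i) with $\theta$ in place of $\phi$ shows that $q\mapsto\mu(\theta(x,b))$ (for any $b\models q$) is well-defined on $S_{\tilde\theta}(M)$, and the same limit computation places it in the pointwise closure of the functions $\frac1n\sum_{i\le n}r_i\,\theta(a_i,\cdot)$; one may in fact restrict to all $r_i=1$, so the range $r_i\in[0,1]$ enlarges nothing. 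Injectivity and bicontinuity go as in (i), now because $\mu$ is determined by the numbers $\mu(\theta(x,b))$ for $\theta\in\Delta_\phi$, $b\in\mathcal U$. Thus the recurring --- and only real --- obstacle is the innocuous-looking well-definedness clause, which is exactly the point at which ``finitely satisfiable in $M$'' does the work.
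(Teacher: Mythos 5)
Your proposal is correct and follows essentially the same route as the paper: Stone duality and the Riesz/Krein--Milman machinery for (ii)--(iv), and pointwise limits of (averages of) the functions coming from types realized in $M$ for (i) and (v). The only difference is one of completeness rather than method --- you actually write out the well-definedness argument for (i) (which the paper attributes to Pillay without proof) and the reduction of (v) to averages of Diracs (which the paper dispatches with ``follows from (i)--(iv)'' plus a parenthetical about needing a compatible family $(f_\theta)_{\theta\in\Delta_\phi}$, a point you also address via the injectivity remark at the end).
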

\begin{proof}
	(i) is due to Pillay (cf. \cite[Remark~2.1]{Pillay}).
	\newline
	(ii): Let $X=S_\phi({\cal U})$. For $p(x)\in X$, set the Dirac's measure $\delta_p(x)$  on $X$, defined by   $\delta_p({\cal B})=1$ if $p\in {\cal B}$, and equals $0$   otherwise. It is easy to verify that the map $p\mapsto \delta_p$ is a bijection between $X$ and all Dirac measures on $X$. Moreover, $p(x)$ is finitely satisfiable in $M$ if and only if   $\delta_p$  is finitely satisfiable in $M$. Indeed, one can easily check that,  for $(a_i)\subseteq A$,  $tp(a_i/{\cal U})\to p$ iff $\delta_{a_i}\to \delta_p$.
	\newline
	(iii) is well-known  (cf. \cite[Section~7.1]{Simon}). For $M$-finitely satisfiability, 
	 suppose that $\mu_r$ is the corresponding  regular Borel probability measure of $\mu$.
	It is easy to verify that $\mu$ is $M$-finitely satisfied (cf. Definition~\ref{fs-measure}) iff $\mu_r$ is $M$-finitely satisfied, that is, for every Borel set ${\cal B}\subseteq S_\phi({\cal U})$ with $\mu_r({\cal B})>0$ there is $a\in M$ such that $tp_\phi(a/{\cal U})\in {\cal B}$.
	\newline
	(iv) is a well-known result in Functional Analysis. Indeed, let $X=S_{\phi}({\cal U})$ and  $D(X)$ be the set of all Dirac measures on $X$; that is,  $D(X)=\{\delta_x: x\in X\}$, where $\delta_x({\cal B})=1$ if $x\in {\cal B}$, and  equals $0$   otherwise. Let $M(X)$ be the set of all regular  Borel probability measures on $X$.
	 By Proposition~437P of \cite{Fremlin4}, every $\delta_x\in D(X)$ is an extreme point of $M(X)$.  By Riesz Representation Theorem (\cite[Theorem~436J]{Fremlin4}),  there is a correspondence between regular  Borel probability measures $\mu\in M(X)$ and positive linear functionals $I_\mu$  on $C(X)$ such that $\|I_\mu\|=1$. (Therefore, $M(X)$ is a compact convex set of $C(X)^*$ with the weak* topology.) By the  Krein--Milman theorem (\cite[V, Theorem~7.4]{Conway}), the closed convex hull $\overline{\text{conv}}(D(X))$ of $D(X)$ is $M(X)$. Moreover, it is easy to verify  that the set of all $M$-finitely satisfiable  measures in $M(X)$ is a closed convex subset of $M(X)$, and   $M$-finitely satisfiable Dirac measures in $D(X)$ are its extreme points. Again, by the Krein--Milman theorem, the proof is completed. 
	\newline
	(v) follows from (i)--(iv), or Proposition~4.6 of \cite{Gannon}. Let $\mu\in{\frak M}_\phi(\cal U)$ be finitely satisfiable in $M$.
	There are two points. First, for any $\theta\in\Delta_\phi$, the closures of  functions of the forms  $\sum_1^n r_i\cdot\theta(a_i,\bar y)$ (where $\sum r_i=1$ and $r_i> 0$) and $\frac{1}{n}\sum_1^n  \theta(a_i,\bar y)$ 
	are the same.
	 Second, note that the functions of the form  $\sum_1^n r_i\cdot\phi(a_i,\bar y)$   alone are not enough to define a measure, because we need information about the Boolean combinations of the instances $\phi(x,b)$'s. Therefore, for every $\theta(x,\bar y)\in \Delta_\phi$ we need to have a function $f_\mu^\theta$ on $S_{\bar y}(M)$ as follows:  $f_\mu^\theta(q)=\mu(\theta(x,\bar b))$ where $\bar b\models q$. Now, by (iv) above or \cite[Proposition~4.6]{Gannon}, $f_\mu^\theta$ is in the closure of functions   $f_{a_i}^\theta$, $a_i\in M$ (where $f_{a_i}^\theta(q)=\sum_1^n r_i\cdot\phi(a_i,\bar y)$ for some/any $\bar b\models q$).
\end{proof}

\begin{Remark}   \label{remark global}
(i) Note that a global version of Fact~\ref{Dirac} is true
 for global types/measures instead of $\phi$-types/$\phi$-measures. The proof is just an adaptation of the argument of  Fact~\ref{Dirac}. Although the primary purpose of this article is to study local properties,  we will use the global version in Theorem~\ref{global angelic} below.
\newline
(ii) History. The correspondence between coheirs of $\phi$-types and suitable functions in (i) above was observed by Pillay \cite{Pillay}. The connection with Grothendieck's  result was first asserted  in \cite{Ben-Groth}. It seems that (in model theory) the relationship between Keisler $\phi$-measures and the convex hull of continuous functions was first observed  by Gannon \cite{Gannon}. The correspondence between Keisler measures and regular Borel probability measures on the type spaces is mentioned  in \cite[Section~7.1]{Simon}.
\end{Remark}

Let $A$ be a (small) set of the monster model $\cal U$. A set $F$ is called a {\em closed set over $A$} if it is the set of realizations in $\cal U$ of a partial type over $A$. The complements of closed sets over $A$ are called the {\em open sets over $A$.} The Borel sets over $A$ are the sets in the $\sigma$-algebra generated by the open sets over $A$. Alternatively, these   correspond  to the Borel sets of the relevant Stone space of complete types over $A$.

\begin{Definition}  \label{Borel definability}
{\em In the following, all types/measures are $A$-invariant.
	 \newline (i)(\cite{HP}) A global type $p(x)$ is called {\em Borel definable over $A$} if for any $L$-formula $\phi(x,y)$, the set $\{b\in{\cal U}: \phi(x,b)\in p(x)\}$ is a Borel set over $A$.
\newline 
(ii)(\cite{HP})  A global type $p(x)$ is called {\em strongly Borel definable over $A$}   (or {\em $DBSC$ definable over $A$}) if for any $L$-formula $\phi(x,y)$, the set $\{b\in{\cal U}: \phi(x,b)\in p(x)\}$ is a  {\em finite} Boolean combination of closed sets over $A$.\footnote{In response to the referee's question about the naming $DBSC$, we must mention that: (i) the notion of $DBSC$ function (on metric spaces) is well-known in functional analysis (cf. Fact~\ref{DBSC=strong}). (ii) This nomenclature makes it easier for people to search in analysis's results and use them for applications in model theory.}
\newline
 (iii)(\cite{HP}) A global (Keisler) measure $\mu(x)$ is called {\em Borel definable over $A$} if for any $L$-formula $\phi(x,y)$ and any closed subset $F\subseteq[0,1]$, the set $\{b\in{\cal U}: \mu(\phi(x,b))\in F\}$ is a Borel set over $A$.
\newline 
(iv) Let $\phi(x,y)$ be a formula and $p(x)$ a global $\phi$-type. Then {\em Borel definability} and {\em strong Borel definability} are defined similarly. In this case, the solutions correspond to Borel subsets of $S_{\tilde\phi}(A)$. The {\em Borel definability} of a global $\phi$-measure is defined similarly.  (Notice that for any $\theta\in\Delta_\phi$ the solutions correspond to Borel subsets of $S_{\tilde\theta}(A)$. Cf. the paragraph before Definition~\ref{fs-measure}.)
\newline 
(v) Let $B\subset\cal U$ be a small set of parameters such that every type over $A$ is realized in $B$. For non-global types and non-global $\phi$-types over $B$, the {\em Borel definability} and {\em strong Borel definability} over $A$ are defined similarly. For non-global measures and non-global $\phi$-measures, the {\em Borel definability} is defined similarly.}
\end{Definition}

\begin{Fact} \label{strong Borel=DBSC}  (a) Let $A$ be a (small) set and $p(x)$ a global type which is finitely satisfiable in $A$. For any  $L$-formula $\phi(x,y)$, define the function $p_\phi:S_{\tilde{\phi}}({A})\to\{0,1\}$, denoted by $p_\phi(q)=1$ iff $\phi(x,b)\in p$ for some (any) $b\models q$. Then the following are equivalent:
	\newline
(i)   $p(x)$ is strongly Borel definable over $A$.
\newline
(ii) For any $L$-formula $\phi(x,y)$, the function $p_\phi$ is  (generalized) $DBSC$. (Cf. Definitions~\ref{subclasses}, \ref{generalized subclasses} and Fact~\ref{DBSC=strong}.)

\medskip\noindent (b) The same holds for global $\phi$-types,  non-global types and non-global $\phi$-types.
\end{Fact}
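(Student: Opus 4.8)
The plan is to transport the two sides of the equivalence into the Stone space $S_{\tilde\phi}(A)$ and read them off the dictionary already in place in the preliminary section, Fact~\ref{DBSC=strong} together with Definition~\ref{generalized subclasses}(i), which say that for a \emph{simple} function on a compact space, being $DBSC$ (generalized $DBSC$ in the non-metric case) is literally the same as being a finite sum $\sum r_i\chi_{W_i}$ with the $W_i$ disjoint differences of closed sets. Since $p_\phi$ is $\{0,1\}$-valued, this reduces the whole statement to a comparison of Boolean algebras of ``definable-over-$A$'' sets.

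First I would verify that $p_\phi$ is well defined and, in fact, that $\phi(x,b)\in p$ depends only on $\mathrm{tp}_{\tilde\phi}(b/A)$; this is precisely where finite satisfiability of $p$ in $A$ enters. Indeed, if $\mathrm{tp}_{\tilde\phi}(b_1/A)=\mathrm{tp}_{\tilde\phi}(b_2/A)$ but, say, $\phi(x,b_1)\wedge\neg\phi(x,b_2)\in p$, then finite satisfiability produces $a\in A$ with $\models\phi(a,b_1)\wedge\neg\phi(a,b_2)$, a contradiction. Hence $p_\phi$ factors as $p_\phi=\bar p_\phi\circ\pi_\phi$ through the restriction map $\pi_\phi:S_{\tilde\phi}({\cal U})\to S_{\tilde\phi}(A)$, for a unique $\bar p_\phi:S_{\tilde\phi}(A)\to\{0,1\}$; equivalently, $D_\phi:=\{b\in{\cal U}:\phi(x,b)\in p\}$ is invariant under equality of $\tilde\phi$-type over $A$ (in particular $\mathrm{Aut}({\cal U}/A)$-invariant), and $D_\phi$ is the preimage of $\bar p_\phi^{-1}(1)$ under $b\mapsto\mathrm{tp}_{\tilde\phi}(b/A)$.

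The one non-bookkeeping ingredient I would isolate is an elementary fact about quotients of compact Hausdorff spaces: if $h:Y\to Z$ is a continuous surjection and $E\subseteq Y$ is $h$-saturated, that is, $E=h^{-1}(h(E))$, then $E$ is closed, resp.\ open, iff $h(E)$ is --- using that $h$ is a closed map and that $Z\setminus h(E)=h(Y\setminus E)$ for saturated $E$ --- and hence $E\mapsto h(E)$ restricts to a Boolean-algebra isomorphism between the $h$-saturated members of the algebra generated by the closed sets of $Y$ and the algebra generated by the closed sets of $Z$. I would apply this to $\pi_\phi$ and to the restriction $S_y(A)\to S_{\tilde\phi}(A)$. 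Combining everything: $p$ is strongly Borel definable over $A$ (Definition~\ref{Borel definability}) means that for every $\phi$ the $\mathrm{Aut}({\cal U}/A)$-invariant set $D_\phi$ is a finite Boolean combination of closed sets over $A$, i.e.\ the corresponding subset of $S_y(A)$ lies in the algebra generated by the closed sets; since that subset is $\tilde\phi$-saturated, this is equivalent to $\bar p_\phi^{-1}(1)$ lying in the algebra generated by the closed sets of $S_{\tilde\phi}(A)$; and pulling back along the ($\pi_\phi$-saturated) identity $p_\phi^{-1}(1)=\pi_\phi^{-1}(\bar p_\phi^{-1}(1))$, this is in turn equivalent to $p_\phi^{-1}(1)$ lying in the algebra generated by the closed sets of $S_{\tilde\phi}({\cal U})$, i.e.\ (by Fact~\ref{DBSC=strong} and Definition~\ref{generalized subclasses}) to $p_\phi$ being $DBSC$. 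This gives (a); for (b) the argument is verbatim for a global $\phi$-type (where $\phi$ is the only relevant formula) and for a non-global type or $\phi$-type over a model or set $N\supseteq A$ one simply replaces $S_{\tilde\phi}({\cal U})$ by $S_{\tilde\phi}(N)$, finite satisfiability in $A$ again guaranteeing that $\phi(x,b)\in p$ depends only on $\mathrm{tp}_{\tilde\phi}(b/A)$.

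The step I expect to be the main obstacle is the passage, in both directions, between ``closed sets over $A$'' --- which by definition are cut out by \emph{arbitrary} partial types over $A$, living in the full Stone space $S_y(A)$ --- and ``$\tilde\phi$-closed sets over $A$'' in the much smaller space $S_{\tilde\phi}(A)$. The real content there is the combination of $\tilde\phi$-invariance of $D_\phi$ (a consequence of finite satisfiability in $A$) with the saturated-surjection fact above, including the mildly fiddly point that a saturated finite Boolean combination of closed sets can be rewritten using saturated closed and open pieces; once that is in place, the rest is a direct transcription of the preliminary section.
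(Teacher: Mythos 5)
Your proposal is correct and takes essentially the same route as the paper, whose proof is just the observation that Fact~\ref{DBSC=strong} (simple $DBSC$ $=$ finite combination of differences of closed sets, extended to the non-metric case by Definition~\ref{generalized subclasses}) combines with the correspondence of Fact~\ref{Dirac}(i); what you add is the bookkeeping, glossed over in the paper, that matches ``finite Boolean combination of closed sets over $A$'' in the sense of Definition~\ref{Borel definability} with membership in the algebra generated by the closed sets of $S_{\tilde\phi}(A)$, via the factorization of $p_\phi$ through the restriction maps. The one step you assert rather than prove --- that an $h$-saturated member $E$ of the algebra generated by the closed sets of $Y$ has image in the corresponding algebra of $Z$ --- is true but does not follow merely from ``saturated closed maps to closed, saturated open maps to open''; a complete two-line argument is to take a disjoint normal form $E=\bigcup_i(F_i\setminus G_i)$ with $G_i\subseteq F_i$ closed, so that $\chi_E=u-v$ with $u=\sum_i\chi_{F_i}$, $v=\sum_i\chi_{G_i}$, put $u''(z)=\max_{y\in h^{-1}(z)}u(y)$ and likewise $v''$, note $[u''\geq n]=h([u\geq n])$ is closed since $h$ is a closed map, and use saturation of $E$ (i.e.\ constancy of $\chi_E$ on fibers) to get $u''-v''=\chi_{h(E)}$, whence $h(E)=\bigcup_n\bigl([u''\geq n]\setminus[v''\geq n]\bigr)$.
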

\begin{proof}
	This   follows from Facts~\ref{DBSC=strong} and \ref{Dirac}(i). Indeed, if $S_y(A)$ is metrizable  this follows from  Fact~\ref{DBSC=strong}. If  $S_y(A)$ is not metrizable, this follows from  Definition~\ref{generalized subclasses}.
\end{proof}

We now give the suitable  {\em strong} notion  to measures. 

\begin{Definition}[Baire-1/2 definable measure]  \label{strong Borel measure}  {\em  In the following, all measures are $A$-invariant.
\newline 
(i) Let  $\mu(x)$  a global (Keisler) measure which is finitely satisfiable in $A$. For any  $L$-formula $\phi(x,y)$, define the function $\mu_\phi:S_{\tilde{\phi}}(A)\to[0,1]$, denoted by $\mu_\phi(q):=\mu(\phi(x,b))$  for some (any) $b\models q$.
  $\mu(x)$ is called {\em (generalized) Baire-1/2 definable over $A$} if for any $L$-formula $\phi(x,y)$, the function $\mu_\phi$ is (generalized) Baire-1/2 as in Definition~\ref{generalized subclasses}.
\newline 
(ii) For global $\phi$-measures, non-global measures and non-global $\phi$-measures, the {\em (generalized) Baire-1/2 definability} is defined similarly.  (Compare Definition~\ref{Borel definability}(iv),(v) and Fact~\ref{strong Borel=DBSC}.)}
\end{Definition}

In the next lemma we give a new characterization of $NIP$ for formulas. 
First, we need some definitions.
(Note the distinction  in form between the two definitions below. In fact, the part (i) of Definition~\ref{semi-NIP}  corresponds to $NIP$ in {\em classical} logic and the part (ii) corresponds to $NIP$ in {\em continuous} logic.)

\begin{Definition} \label{semi-NIP}
	{\em Let $X$ be a set and $(f_i)$ a sequence of $[0,1]$-valued functions on $X$.
	
\noindent
(i) We say the {\em independence property is
uniformly blocked on $(f_i)$}  if  there is a natural number $N$ such that for each $r<s$  there is a set
$E\subset\{1,\ldots,N\}$ such that  for  each $i_1<\cdots<i_{N}<\omega$, the following does not hold
$$\exists y\big(\bigwedge_{j\in E}f_{i_j}(y)\leq r~\wedge~\bigwedge_{j\in N\setminus E}f_{i_j}(y)\geq s\big).$$	
	
\noindent
(ii) We say the {\em independence property is
	semi-uniformly blocked on $(f_i)$}  if
for each $r<s$ there is a natural number $N_{r,s}$ and a set
$E\subset\{1,\ldots,N_{r,s}\}$ such that  for  each $i_1<\cdots<i_{N_{r,s}}<\omega$, the following does not hold
$$\exists y\big(\bigwedge_{j\in E}f_{i_j}(y)\leq r~\wedge~\bigwedge_{j\in N_{r,s}\setminus E}f_{i_j}(y)\geq s\big).$$ }
\end{Definition}  
\begin{Remark}
	(i): We remark that, for types in classical logic, two conditions (i),(ii) above are the same. Indeed,  notice that, as $f_i(y):=\phi(a_i,y)$ is $\{0,1\}$-valued, so $r=0$ and $s=1$. For measures in classical logic, the condition (i) is strictly stronger than (ii). 
	\newline
	 (ii): The key point in the above definition is that the   conditions (i),(ii) lead to modes of convergence  (i.e. $DBSC$ and Baire-1/2 convergence) that are stronger than the pointwise convergence. This creates a fundamental difference between the results of this article and \cite{Gannon},\cite{Gannon sequential}. (See also \cite{K-Morley} and \cite{K-generic}.) The difference between our work and Ben~Yaacov's result \cite[Lemma~5.4]{Ben} is that we do not assume that the sequence is indiscernible (cf. Lemma~\ref{bounded variation-continuous} below).
	 \newline
	 (iii): We note that the notion of dependence in \cite{Ben} is equivalent to the notion of sequential dependence in \cite{Gannon} using compactness theorem. On the other hand, Ben Yaacov's results \cite{Ben} is about $NIP$ theories (in continuous logic) but not $NIP$ formulas. Of course, as we will show in Lemma~\ref{bounded variation-continuous}, his results can be adapted to local case (formula-by-formula). Also, he considered indiscernible sequences \cite[Lemma~5.4]{Ben}, but the sequences are arbitrary in the following.
	 \newline
	 (iii): In  response to the question of why we use subsequence instead of sequence itself in Lemma~\ref{bounded variation-continuous} below, the reason is that we do not use indiscernible sequences but rather arbitrary sequences. Therefore, the sequence may not  converge, but a subsequence definitely converges.  (If we worked with indiscernible sequences, everything would  be simpler but we would not achieve our stronger goal.)\footnote{Therefore, to investigate any example for Lemma~\ref{bounded variation-continuous}, we must pay attention to finding an appropriate {\bf sub}sequence.}
\end{Remark}

\begin{Lemma} \label{bounded variation-continuous}  Let $T$ be a complete theory and $\phi(x,y)$  an $NIP$ formula. Suppose that $(c_n)$ is an infinite sequence (in the monster model of $T$) and
	 $(g_n)$ is a sequence of functions of the form $g_n=\sum_{i=1}^{k_n} r_i\cdot\phi(a_i,y)$ where $\sum r_i=1, r_i\in {\Bbb R}^+$, $n\in\Bbb N$ and $a_i\in\cal U$.  Then the following properties hold (are even  equivalent):
	 
	 \medskip
	\noindent (i) There is an infinite subsequence $(c_n')\subseteq(c_n)$ such that:
	
	(a) There is a real number $C$ such that
	for all $b$ in the monster model, $$\sum_{n<\omega}\big|\phi(c_{n+1}',b)-\phi(c_n',b)\big|\leq  C.\footnote{It is easy to show that this definition is exactly equivalent to having finite alternation number for $\phi$. Cf. \cite{S-invariant}, Section~2.1. Of course, in the present paper the sequence is no necessarily indiscernible.}$$
	
	(b)  The independence property is uniformly
	blocked on $(\phi(c_n',y))$.
	
		\noindent (ii) There is an infinite subsequence $(g_n')\subseteq(g_n)$ such that the independence property is semi-uniformly
		blocked on $(g_n')$. 

	\noindent (iii) Suppose that $(c_n')$ is the subsequence in (i) above, then  $(\phi(c_n',y))$ converges pointwise to a function $f$ which is   $DBSC$.	
	
	\noindent (iv)  Suppose that $(g_n')$ is the subsequence in (ii) above, then   $(g_n')$ converges pointwise to a function  $g$ which is  Baire-1/2.	
\end{Lemma}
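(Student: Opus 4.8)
# Proof Proposal

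The plan is to establish the cycle of implications (i)$\Rightarrow$(iii)$\Rightarrow$(iv)$\Rightarrow$(ii)$\Rightarrow$(i), relying on the combinatorial consequences of $NIP$ together with Facts~\ref{0}, \ref{DBSC fact}, and \ref{DBSC=strong}. The central analytic input is that a uniformly bounded sequence with uniformly bounded total variation (condition (i)(a)) produces, along any ultrafilter, a limit that is a difference of semicontinuous functions, hence $DBSC$; the central combinatorial input is that $NIP$ for $\phi$ forces that, after passing to a subsequence, the alternation count of any instance $\phi(a_n',b)$ (as $b$ varies) is bounded, which is exactly what yields both (i)(a) and (i)(b).

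First I would prove (i)$\Rightarrow$(iii). Given the subsequence $(a_n')$ from (i)(a), uniform boundedness plus $\sum_n |\phi(a_{n+1}',b)-\phi(a_n',b)| \le C$ for all $b$ puts us exactly in the hypothesis of Fact~\ref{0}(i) with $X = S_{\tilde\phi}(\mathcal U)$ and $f_n = \phi(a_n',y)$. Fact~\ref{0} gives a pointwise limit $f$ with the decomposition $f = \sum_n (f_{n+1}-f_n)^+ - \sum_n (f_{n+1}-f_n)^-$; since these two series are bounded sums of continuous functions increasing to semicontinuous limits, $f = F_1 - F_2$ with $F_1,F_2$ lower semicontinuous, so $f \in DBSC$ by Fact~\ref{DBSC fact}(iv). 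Because $f$ is a pointwise limit of $\{0,1\}$-valued functions it is simple, and then Fact~\ref{DBSC=strong} upgrades ``$DBSC$'' to ``strongly Borel,'' i.e.\ strongly $DBSC$, giving (iii). (In the non-metrizable case one invokes Definition~\ref{generalized subclasses} and the ``generalized'' versions throughout.) For (iii)$\Rightarrow$(iv): each $g_n' = \sum r_i \phi(a_i,y)$ is a convex combination of the $\phi(a_i',y)$; taking the same convex coefficients of the $DBSC$-decompositions and using that $DBSC$ is a Banach algebra (Fact~\ref{properties}), one sees each $g_n'$, and hence the pointwise limit $g$ of a suitable subsequence, lies in the uniform closure of $DBSC$, which is $B_{1/2}$ by Definition~\ref{subclasses}(v) — here one must be slightly careful to extract the subsequence of the $g_n'$ whose indices are compatible with the chosen subsequence $(a_n')$ so that the limit exists, using compactness of $S_{\tilde\phi}(\mathcal U)$ and a diagonal/ultrafilter argument.

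Next, (iv)$\Rightarrow$(ii): if $g$ is Baire-1/2, then $g$ is a uniform limit of simple $DBSC$ functions, and a simple $DBSC$ function has finite range with each level set a finite Boolean combination of closed sets over the relevant parameters (Fact~\ref{DBSC=strong}); I would argue that for each $r<s$ this bounded combinatorial complexity of the level structure of $g$ — transferred back to the $g_n'$ via uniform approximation — prevents, for $N_{r,s}$ large enough, finding parameters realizing the alternating pattern on any $N_{r,s}$ consecutive terms $i_1<\dots<i_{N_{r,s}}$, which is precisely the statement that the independence property is semi-uniformly blocked. Finally (ii)$\Rightarrow$(i) is where the genuine model theory enters: from a subsequence $(g_n')$ on which $IP$ is semi-uniformly blocked I would, first, pass from the convex combinations $g_n'$ back to a subsequence of the underlying instances $\phi(a_n',y)$ on which $IP$ is \emph{uniformly} blocked — this is possible because $\phi$ is $NIP$, so by Simon/Shelah-type alternation bounds there is a single $N$ bounding the number of alternations of $(\phi(a_n',b))_n$ uniformly in $b$ after thinning; the uniform bound $N$ gives (b) directly, and it also gives (a) with $C = N$, since a $\{0,1\}$-sequence changes value at most $N$ times so $\sum_n|\phi(a_{n+1}',b)-\phi(a_n',b)| \le N$.

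The main obstacle I anticipate is the passage in (ii)$\Rightarrow$(i) from semi-uniform blocking \emph{on the convex combinations} $g_n'$ to uniform blocking \emph{on the underlying instances} $\phi(a_n',y)$: the $g_n'$ are averages, so a blocked alternation pattern for the averages does not literally say anything about any individual $\phi(a_i',y)$, and one needs the $NIP$ hypothesis on $\phi$ (not merely a property of the sequence $(g_n)$) to recover control of the instances — essentially this is the point where one must use that the family $\{\phi(x,b)\}$ has finite VC dimension, e.g.\ via the $\epsilon$-net/$\epsilon$-approximation theorem or via Simon's characterization of $NIP$ through alternation ranks, to extract from a ``measure-level'' boundedness a ``type-level'' boundedness. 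I would handle this by first fixing $r<s$, using the semi-uniform blocking together with finite VC dimension of $\phi$ to bound the number of $s$-$r$-alternations of each $\phi(a_n',b)$ uniformly, then diagonalizing over a countable dense set of pairs $(r,s)$ (or just the single relevant pair for $\{0,1\}$-valued $\phi$) to get one subsequence $(a_n')$ doing the job for all thresholds simultaneously — this produces the uniform $N$ and closes the loop.
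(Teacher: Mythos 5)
There is a genuine gap, and it sits exactly where the lemma's real content lies: parts (ii) and (iv), the statements about the convex combinations $g_n$. Your step (iii)$\Rightarrow$(iv) argues that each $g_n'$ is $DBSC$ (true -- in fact each $g_n'$ is a finite convex combination of continuous functions, hence continuous) and concludes that the \emph{pointwise} limit $g$ ``lies in the uniform closure of $DBSC$.'' That inference is false: every Baire-1 function is a pointwise limit of continuous (a fortiori $DBSC$) functions, yet $B_{1/2}\subsetneqq B_1$ by Fact~\ref{DBSC fact}(vi), so pointwise limits of $DBSC$ functions need not be Baire-1/2. The whole point of (iv) is that $NIP$ forces the limit into the proper subclass $B_{1/2}$ of $B_1$, and nothing in your argument uses $NIP$ at this step. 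Since your route to (ii) goes through (iv), the collapse of (iii)$\Rightarrow$(iv) takes (iv)$\Rightarrow$(ii) down with it, so neither (ii) nor (iv) is established.

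The paper's proof gets (ii) and (iv) by a quite different mechanism: each $g_n$ is extended to a Keisler measure $\nu_n$, which corresponds (via Ben Yaacov--Keisler) to a type $\mathbf p_n$ in the randomization $T_*^R$; Ben Yaacov's theorem that $NIP$ of $\phi$ in $T$ implies $NIP$ of $\mu[[\phi(x,y)]]$ in $T_*^R$ then yields the semi-uniform blocking (ii) by compactness and Ramsey in continuous logic, and (iv) follows from the Haydon--Odell--Rosenthal ordinal-index characterization of Baire-1/2 (if the limit were not Baire-1/2, the index $\alpha(f;r,s)$ would be infinite, producing an $IP$ pattern for $\mu[[\phi]]$, a contradiction). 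An alternative correct route, used elsewhere in the paper (Theorem~\ref{fim}, Corollary~\ref{generalized definable}), is the Glivenko--Cantelli/VC approximation: under $NIP$ the limit measure is \emph{uniformly} approximated by finite averages of types, each of which is $DBSC$ by part (iii), and a uniform limit of $DBSC$ functions is Baire-1/2 by definition. Either of these would repair your argument; as written, the passage from ``pointwise limit of $DBSC$ functions'' to ``uniform closure of $DBSC$'' cannot. (Your treatment of (i) and (iii), via Fact~\ref{0} and the alternation bound from $NIP$, is essentially the paper's, which cites \cite{K-Baire} for exactly those steps.)
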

\begin{proof}
	(i)(a) and (i)(b) follow from Proposition~2.14(iii) and Lemma~2.8 of \cite{K-Baire}. (In fact, they are equivalent.)
	
	\noindent (ii): We will  use some results of \cite{Ben} and \cite{Ben-Keisler}. Indeed, let $A=\{a_n:n<\omega\}$, $M^*$ a model such that every type over $A$ is realized in it, and $T_*=Th(M^*)$ the complete $L(M^*)$-theory of $M^*$.  Note that every $g_n$ can be extended to a global Keisler measure $\nu_n$. (Indeed, if $g_n=\sum r_i\phi(a_i,y)$, define  $\nu_n(x)=\sum r_i\text{tp}(a_i/M^*)$. That is, $\nu_n(\psi(x))=\sum r_i\psi(a_i)$    for all $L(M^*)$-formula $\psi(x)$.)
	 Note that $\nu_n$ is a measure on the Stone space $S(T_*)$.   Then, by Corollary 2.10 of \cite{Ben-Keisler}, every measure $\nu_n$ in $T_*$ corresponds to a type ${\bf p}_n$ in the randomization  $T_*^R$ of  $T_*$. In this case, for every $L(M^*)$-formula $\psi(x)$, $\nu_n(\{q:\psi(x)\in q\})=(\mu[[\psi(x)]])^{{\bf p}_n}$ where $\mu[[\psi(x)]]$ is the corresponding formula in $T_*^R$.  By Theorem~5.3 (or Theorem~4.1) of \cite{Ben}, as $\phi(x,y)$ is $NIP$ in $T$, its corresponding formula $\mu[[\phi(x,y)]]$ is dependent in $T_*^R$. Note that the Ben Yaacov's argument is essentially local (formula-by-formula), although the statement of Theorem~5.3 is global, it is easy to check that the local result holds too. Let ${\bf a}_n$ be a realization of ${\bf p}_n$.
	Now, as $T_*^R$ is a continuous theory and the formula $\mu[[\phi(x,y)]]$ is dependent in it,
	 there is a subsequence   $({\bf a}_n')\subseteq({\bf a}_n)$ such that  the independence property is semi-uniformly
	blocked on $(\mu[[\phi({\bf a}_n',y)]])$.\footnote{Note that the sequence  $({\bf a}_n)$  is not indiscernible, so we must find a suitable subsequence that has the desired property.} If not, by compactness (of continuous logic) and Ramsey's theorem, one can show that the formula $\mu[[\phi(x,y)]]$ has $IP$, a contradiction.  (The argument is similar to Theorem~4.3(iv) of \cite{K-classification}.)
	 This means that  the independence property is semi-uniformly
	 blocked on  the corresponding sequence $(\nu_n')$ of measures in $T$. Now, if we transfer this to $(g_n)$, then we have that the independence property is semi-uniformly
	 blocked on $(g_n')$. (Note that our terminology  is different from Ben~Yaacov's terminology \cite{Ben} as he called this notion  {\em uniform} dependence. Because the natural number $N_{r,s}$ depends on $r,s$, we believe that {\bf semi}-uniformly dependence is more suitable.)
	
	\noindent (iii) is Proposition~2.14(iv) in \cite{K-Baire}.
	
		\noindent (iv): First, notice that $(g_n')$ converges to a Baire-1 function, denoted by $g$. This is a well known result in functional analysis, due to Rosenthal \cite{Ros}.
		 Suppose, for a
		contradiction, that $g$ is not Baire-1/2.
	Equivalently, suppose  that 
		the limit $f$ of $(\mu[[\phi({\bf a}_n',y)]])$
		 is not Baire-1/2. 
		 
		 Now we want to use two notions of ranks/indexes for functions (i.e.  $K_m$ and $\alpha$) that were introduced in \cite[page~7]{HOR}.
		 For a real-valued function $f$ on a compact Hausdorff space $K$ and $r<s$, let $K_0(f;r,s)=K$ and $K_{\alpha+1}=\overline{K_\alpha\cap[f\leq r]}\cap\overline{K_\alpha\cap[f\geq s]}$. At the limit ordinal $\alpha$ we set $K_\alpha(f;r,s)=\bigcap_{\beta<\alpha}K_\beta(f;r,s)$.   We let $\alpha(f;r,s)=\inf\{\gamma<\omega_1~|K_\gamma(f;r,s)=\emptyset\}$ if $K_\gamma(f;r,s)=\emptyset$ for some $\gamma<\omega_1$ and let $\alpha(f;r,s)=\omega_1$ otherwise.
		 
		  As $f$ is not Baire-1/2, by the equivalence
		(1)~$\Leftrightarrow$~(5) of Proposition~2.3 of \cite{HOR}, there	are $r<s$ such that the ordinal index $\alpha(f;r,s)$ is	infinite; equivalently,  for all natural numbers $m$,	$K_m(f;r,s)\neq\emptyset$.   By Lemma~3.1 in \cite{HOR}, for all $m$ there are $r<r'<s'<s$ and a subsequence	$(\mu[[\phi({\bf a}_n'',y)]])_{n<\omega}$  of $(\mu[[\phi({\bf a}_n',y)]])_{n<\omega}$  so that   
		$$\exists y\big( \bigwedge_{i\in E}\phi({\bf a}_{n_i}'',y)\leqslant r' ~\wedge ~\bigwedge_{i\in F}\phi({\bf a}_{n_i}'',y)\geqslant s'\big)$$
		holds, for all disjoint subsets $E,F$ of $\{1,\ldots,m\}$ and $n_1<\cdots<n_m$. So, by the compactness theorem, $\mu[[\phi(x,y)]]$ has $IP$, a contradiction. This means that the limit of $(\nu_n')$  (or $(g_n')$) is Baire-1/2.
\end{proof}

In the following we give a result on definability of measures on countable models (sets), assuming $NIP$. In the next section, we give an alternative proof which is also a generalization to
 the general case (i.e. without assuming countability). In Section~5 we will give a result that is stronger in another respect.

Recall that $\Delta_\phi$ is the set of all Boolean combinations of the instances of $\phi(x,y)$ (with the variable $x$).

\begin{Theorem} \label{DBSC measure}
	The following are equivalent:
	\begin{itemize}
		\item[(i)] $\phi$ has $NIP$ with respect to $T$.
		
		\item[(ii)] For any $\mu(x)\in{\frak M}_\phi({\cal U})$ and any countable model $M$, if $\mu$ is finitely satisfiable in $M$, then $\mu$ is Baire-1/2 definable over $M$. 
		
				\item[(iii)]  For any $p(x)\in S_\phi({\cal U})$ and any countable model $M$, if $p$ is finitely satisfiable in $M$, then $p$ is  strongly Borel definable over $M$. 
	\end{itemize}
\end{Theorem}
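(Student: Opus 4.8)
The plan is to establish the cycle (i)~$\Rightarrow$~(ii)~$\Rightarrow$~(iii)~$\Rightarrow$~(i). Throughout, the countability of $M$ enters only to guarantee that the Stone spaces $S_{\tilde\theta}(M)$ for $\theta\in\Delta_\phi$ are compact \emph{metric} spaces, so that Definition~\ref{subclasses} and Facts~\ref{DBSC fact}, \ref{DBSC=strong}, \ref{strong Borel=DBSC} apply literally; the translation between measures/types and functions on these spaces is furnished by Fact~\ref{Dirac}, and the analytic input by Lemma~\ref{bounded variation-continuous}.

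For (i)~$\Rightarrow$~(ii): fix a countable $M$, a global $\phi$-measure $\mu$ finitely satisfiable in $M$, and a $\theta\in\Delta_\phi$ (note $\theta$ is again $NIP$, since $NIP$ formulas are closed under Boolean combinations). By Fact~\ref{Dirac}(v) the function $\mu_\theta$ on $X:=S_{\tilde\theta}(M)$ lies in the pointwise closure of the convex hull $G$ of $\{\theta(a,\cdot):a\in M\}$. Because $\phi$ is $NIP$, this last family contains no independent subsequence --- an independent subsequence in Rosenthal's sense would, by compactness, witness $IP$ for $\theta$ --- and, this being a property passed to convex hulls (here is where convexity, in the spirit of Talagrand's analysis of Glivenko--Cantelli classes, is used), neither does $G$. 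Hence $\overline G$ is a compact subset of $B_1(X)$; since $B_1(X)$ is angelic (Bourgain--Fremlin--Talagrand; cf.\ also Appendix~A), the point $\mu_\theta\in\overline G$ is the pointwise limit of a \emph{sequence} $(g_n)$ from $G$. Now Lemma~\ref{bounded variation-continuous}(iv), applied with $\theta$ in the role of $\phi$ and with $(a_n)$ enumerating the parameters occurring in the $g_n$, gives a subsequence $(g_n')$ converging pointwise to a Baire-$1/2$ function, which is $\mu_\theta$. As $\theta\in\Delta_\phi$ was arbitrary, $\mu$ is Baire-$1/2$ definable over $M$ in the sense of Definition~\ref{strong Borel measure} (honestly, not merely ``generalized'', since the Stone spaces are metric).

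For (ii)~$\Rightarrow$~(iii): let $p\in S_\phi(\mathcal U)$ be finitely satisfiable in a countable $M$. By Fact~\ref{Dirac}(ii) the Dirac measure $\delta_p$ is a global $\phi$-measure finitely satisfiable in $M$, hence by (ii) it is Baire-$1/2$ definable over $M$; since $(\delta_p)_\phi=p_\phi$ on $S_{\tilde\phi}(M)$, the function $p_\phi$ is Baire-$1/2$. But $p_\phi$ is $\{0,1\}$-valued, hence simple, so by Fact~\ref{DBSC fact}(v) it is in fact $DBSC$, and then by Fact~\ref{strong Borel=DBSC} the type $p$ is strongly Borel definable over $M$. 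For (iii)~$\Rightarrow$~(i) I argue contrapositively: if $\phi$ has $IP$, fix a $\phi$-independent sequence $(a_n)_{n<\omega}$ in the monster and a countable model $M\supseteq\{a_n:n<\omega\}$. In $S_{\tilde\phi}(M)$ the clopen sets $\bigcap_{n\in Q}[\phi(a_n,\cdot)=1]\cap\bigcap_{n\in P}[\phi(a_n,\cdot)=0]$ are nonempty for all finite disjoint $P,Q$, so $(\phi(a_n,\cdot))_n$ is an $\ell^1$-sequence; by Rosenthal's theorem the pointwise closure $H$ of $\{\phi(a,\cdot):a\in M\}$ is therefore \emph{not} contained in $B_1(S_{\tilde\phi}(M))$. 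Pick $f\in H\setminus B_1$; by Fact~\ref{Dirac}(i) it equals $p_\phi$ for some global $\phi$-type $p$ finitely satisfiable in $M$. Since $DBSC\subseteq B_1$ (Fact~\ref{DBSC fact}(vi)), $p_\phi\notin DBSC$, so by Fact~\ref{strong Borel=DBSC} $p$ is not strongly Borel definable over $M$, contradicting (iii).

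The one genuine obstacle is the step in (i)~$\Rightarrow$~(ii) that upgrades ``$\mu_\theta$ belongs to the pointwise closure of $G$'' to ``$\mu_\theta$ is the pointwise limit of a \emph{sequence} in $G$'': this is exactly where the $NIP$ hypothesis does its work, through the facts that the $NIP$ / Glivenko--Cantelli / Fremlin--Talagrand-stable condition is inherited by convex hulls and forces relative compactness in $B_1$ together with angelicity (the content of Appendix~A and of Talagrand's theorem). Everything else is bookkeeping with the dictionary of Fact~\ref{Dirac} and the function-space facts of Section~2.
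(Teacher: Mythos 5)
Your proof is correct, and while your (i)~$\Rightarrow$~(ii) is essentially the paper's argument (Fact~\ref{Dirac}(v) to place $\mu_\theta$ in the pointwise closure of the convex hull, BFT's Proposition~5J / angelicity to extract a genuine sequence, then Lemma~\ref{bounded variation-continuous}(iv) to identify the limit as Baire-$1/2$), you close the cycle differently. The paper proves (i)~$\Rightarrow$~(iii) directly (same BFT extraction plus Lemma~\ref{bounded variation-continuous}(iii)) and then establishes both (iii)~$\Rightarrow$~(i) and (ii)~$\Rightarrow$~(i) through the equivalence (iv)~$\Leftrightarrow$~(vi) of \cite[Theorem~4D]{BFT}, i.e.\ by checking relative compactness of $\{\phi(a,y):a\in M\}$ in $M_r(X)$. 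You instead deduce (iii) formally from (ii) by viewing $p$ as the Dirac measure $\delta_p$ and using that a $\{0,1\}$-valued Baire-$1/2$ function is automatically $DBSC$ (Fact~\ref{DBSC fact}(v)) and hence strongly Borel (Facts~\ref{DBSC=strong}, \ref{strong Borel=DBSC}); and you prove (iii)~$\Rightarrow$~(i) contrapositively by exhibiting, from an independent sequence, a non-Baire-1 cluster point of $\{\phi(a,\cdot):a\in M\}$ which by Fact~\ref{Dirac}(i) is $p_\phi$ for a finitely satisfiable type. Both routes rest on the same analytic engine, but your decomposition makes the logical dependence of the type statement on the measure statement explicit and saves one independent argument; the paper's version has the mild advantage of giving each implication on its own, without routing types through measures. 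One small point worth recording if you write this up: in (ii)~$\Rightarrow$~(iii) you should note that the same simple-function argument applies to every $\theta\in\Delta_\phi$, not only to $\phi$ itself, since strong Borel definability of a $\phi$-type is required for all Boolean combinations of instances; and in (iii)~$\Rightarrow$~(i) the existence of a countable model containing the independence witnesses tacitly uses a countable sublanguage containing $\phi$ (an assumption the paper's own proof also makes silently).
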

\begin{proof}
	(i)~$\Longrightarrow$~(ii):   By Fact~\ref{Dirac}(v) above, for any $\theta(x)\in\Delta_\phi$, there is a function $f$ in the pointwise closure of the convex hull $\overline{\textrm{conv}}(A)$ of $A=\{\theta(a,y):S_{\tilde{\theta}}({\cal U})\to\{0,1\} |~ a\in M\}$ such that  $f(q)=\mu(\theta(x,b))$ for some (any) $b\models q$. Note that we can assume that the functions in $A$ (and so the function $f$) are on $S_{\tilde\theta}(M)$. Therefore, as $M$ is countable, $S_{\tilde\theta}(M)$ is Polish and these functions are defined on a Polish space.
	As $\theta$ is $NIP$, By Proposition~5J of \cite{BFT}, the convex hull of $A$ satisfies  any one of conditions of \cite[Theorem~4D]{BFT} iff $A$ does. As $\theta$ is $NIP$, and so $A$ satisfies the condition (viii) of  Theorem~4D, the convex hull of $A$ satisfies the condition (vi) of  Theorem~4D. This means that the convex hull $\textrm{conv}(A)$ of $A$ is angelic and so 
	 there is a sequence $(f_n)\in\textrm{conv}(A)$   such that $f_n\to f$; that is, 
	there is a sequence  $(f_n)$ of the form $f_n(y)=\sum_{i=1}^{k_n}r_i\cdot\theta(a_i,y)$ (where $a_i\in M$ and $\sum r_i=1$, $r_i\in{\Bbb R}^+$) such that $f_n\to f$ pointwise. Note that since  Proposition~5J of \cite{BFT} needs countability, one can consider rational convex hull of $A$; that is, the set of functions $\sum_{i=1}^{k}r_i\cdot\theta(a_i,y)$ where $r_i\in{\Bbb Q}\cap[0,1]$ and $\sum_{i=1}^{k}r_i=1$.
	  Again, as $\theta$ is $NIP$, by Lemma~\ref{bounded variation-continuous}(ii) above, there is a subsequence $(f_n')\subseteq(f_n)$ such that the independence property is semi-uniformly blocked on $(f_n')$, and consequently the sequence $(f_n')$ converges to a Baire-1/2 function, by Lemma~\ref{bounded variation-continuous}(iv). This means that $f$ is Baire-1/2 and so $\mu$ is Baire-1/2  definable.

(i)~$\Longrightarrow$~(iii) is similar to (i)~$\Longrightarrow$~(ii) and even easier.  By Fact~\ref{Dirac}(v) above, there is function $f$ in the pointwise closure of $A=\{\phi(a,y):S_{\tilde{\phi}}({\cal U})\to\{0,1\} |~ a\in M\}$ such that  $f(q)=1$ if $\phi(x,b)\in p$ for some/any $b\models q$, and  $f(q)=0$ otherwise.
 As $\phi$ is $NIP$, by the equivalence (vi)~$\Longleftrightarrow$~(viii) of  Theorem~4D of \cite{BFT}, there is a sequence $(\phi(a_n,y))$ in $A$ such that $\phi(a_n,y)\to f(y)$ pointwise. By Lemma~\ref{bounded variation-continuous}(iii), $f$ is (generalized) $DBSC$ or equivalently strongly Borel. (Cf. Fact~\ref{strong Borel=DBSC} above.)

	(iii)~$\Longrightarrow$~(i): By the equivalence (iv)~$\Longleftrightarrow$~(vi) of \cite[Theorem~2F]{BFT}, we need to show that for any countable model $M$, the family $\phi(a,y),a\in M$, is relatively compact in $M_r(X)$, where $X=S_{\tilde{\phi}}(M)$ and $M_r(X)$ is the space of all functions which are measurable with respect to all Radon measures on $X$ (cf. \cite[Definition~1A(l)]{BFT}). 
	Suppose that $M$ is a countable model, $(a_i)_{i\in I}\in M$ and $\phi(a_i,y)\to f(y)$ pointwise on $X=S_{\tilde{\phi}}(M)$. Note that $f(y)$ defines the type $p(x)=\lim_i tp(a_i/{\mathcal U})$. As $p$ is finitely satisfiable in $M$, by (iii), $f$ is Borel measurable. Therefore, by the condition (iv) of Theorem2F of \cite{BFT} holds, and so 
	the condition (vi) holds. This means, by compactness theorem and as $M$ is arbitrary, that $\phi(x,y)$ has $NIP$.

(ii)~$\Longrightarrow$~(i): As every type is a measure, the argument (iii)~$\Longrightarrow$~(i) above works well. 
\end{proof}
\begin{Remark}
	(i): For the direction (i)~$\Longrightarrow$~(iii) above, in Proposition~2.6 of \cite{HP}, a more general statement is proved for {\bf global} types, but not $\phi$ types. Their argument uses the Morley sequence of types which is not well-defined for $\phi$-types. Although, one can  extend finitely satisfiable  $\phi$-types to finitely satisfiable global types and then uses their argument. We will do this in  Corollary~\ref{generalized definable}.
	\newline(ii): Again we emphasize that the convergence in the above theorem (and Lemma~\ref{bounded variation-continuous}) is strictly stronger than the pointwise convergence in Lemma~4.7 of \cite{Gannon}. This matter is described in more detail in the articles \cite{K-Morley},\cite{K-generic}.
\end{Remark}

\begin{Remark} \label{remark global 2}
Note that Theorem~\ref{DBSC measure} can be proved for global types/measures instead of $\phi$-types/$\phi$-measures (see also Theorem~\ref{global angelic} below). That is, the following are equivalent   (cf. Remark~\ref{remark global} above):
\newline
(i) $T$ has $NIP$. 
\newline
(ii) For any global measure $\mu(x)\in{\frak M}({\cal U})$ and any countable model $M$, if $\mu$ is finitely satisfiable in $M$, then $\mu$ is Baire-1/2 definable over $M$. 
\newline
(iii) For any global type $p(x)\in S({\cal U})$ and any countable model $M$, if $p$ is finitely satisfiable in $M$, then $p$ is  strongly Borel definable over $M$. 
\end{Remark}

\section{Glivenko--Cantelli classes} 
The classical theorem of Glivenko and Cantelli is a generalization of the law of large numbers in probability theory. The families of sets/functions for which the consequence of the Glivenko--Cantelli theorem holds  are called Glivenko--Cantelli classes.\footnote{In a simple word, for a compact space $X$, a family $A$ of real-valued functions on $X$, and a Borel probability measure $\mu$ on $X$, we say that $A$ is a Glivenko--Cantelli class (with respect to $\mu$)  if the condition (ii) in Theorem~465M of \cite{Fremlin4} holds. See also Theorem~\ref{fim}(ii). (Of course, pay attention that in Theorem~\ref{fim} this property is stated for {\bf all}  Borel probability   measures.)} Vapnik and Chervonenkis gave a {\bf uniform} characterization of such classes in \cite{VC}. As mentioned before, Talagrand's characterization \cite{Talagrand} of   Glivenko--Cantelli classes has some advantages over \cite{VC}.\footnote{The difference between VC characterization and Talagrand's characterization is that:
in the latter, this property is given for a fixed Borel measure $\mu$, but in the former, this property is given for all Borel measures simultaneously. Therefore, a class of functions has VC property iff for every Borel probability measure $\mu$ this class has Talagrand's property with respect to $\mu$. In other words, VC property is stronger than Talagrand's property, and 
the latter is finer than the former because it can consider measures separately.} 
In this case,
the fundamental notion  has been  invented by David H. Fremlin, namely the Fremlin--Talagrand stability. From a logical point of view, this notion was first studied in \cite{K-amenable} in the framework of {\em  integral logic}.

\medskip
In the following, given a measure $\mu$ and $k\geqslant 1$, the symbol  $\mu^k$ stands for $k$-fold product of $\mu$ and  $\mu^*$ stands for  the outer measure of  $\mu$.

\begin{Definition}[Fremlin--Talagrand stability, \cite{Fremlin4}, 465B]  \label{Talagrand-stable}
Let $A\subseteq C(X)$ be a pointwise bounded family
of real-valued continuous functions  on $X$. Suppose that $\mu$
is a measure on $X$. We say that $A$ is {\em $\mu$-stable}, if
$A$ is a stable set of functions in the sense of Definition~465B
in \cite{Fremlin4}, that is, whenever $E\subseteq X$ is
measurable, $\mu(E)>0$ and $s<r$ in $\mathbb{R}$, there is some
$k\geqslant 1$ such that $(\mu^{2k})^*D_k(A, E,s,r)<(\mu E)^{2k}$
where
\begin{align*}
D_k(A, E,s,r) = \bigcup_{f\in A}\big\{w\in &
E^{2k}:f(w_{2i})\leqslant s, ~f(w_{2i+1})\geqslant r  \textrm{ for
} i<k\big\}.
\end{align*}
\end{Definition}

The following fact shows the similarity between this notion and the $NIP$ in model theory.

\begin{Fact}[Proposition~4 in \cite{Talagrand}]
	Let $X$ be a compact Hausdorff space, $A\subseteq C(X)$  a pointwise bounded family
	of real-valued continuous functions  on $X$, and $\mu$ (an extension of) a Borel measure on $X$.
	\newline
	(1): For any measurable set $E\subseteq X$, $s<r$ in $\mathbb{R}$ and $k\geqslant 1$, the set $D_k(A, E,s,r)$ is measurable, and so there is no need to use the outer measure.\footnote{Note that in Definition~465B in \cite{Fremlin4} Fremlin uses arbitrary functions but not measurable or continuous.}
	\newline
	(2): The following are equivalent: 
	\begin{itemize}
		\item[(i)] $A$ is $\mu$-stable.
		\item[(ii)]  There is no   measurable subset $E\subseteq X$ 	with  $\mu(E)>0$ and $s<r$  such that for each $k=\{1,\ldots k\}$ $$\mu^k\Big\{w\in E^k: \ \forall I \subseteq k \ \exists f\in A \ \bigwedge_{i\in I}f(w_i)\leqslant s \wedge \bigwedge_{i\notin I} f(w_i)\geqslant r  \Big\}=(\mu E)^k.$$
			\end{itemize}
\end{Fact}
\begin{proof} (1): First, notice that it is easy to verify that we can use $<$ and $>$  instead of $\leqslant$ and $\geqslant$. (Fremlin used $\leqslant$ and $\geqslant$ and Talagrand used $<$ and $>$.)\footnote{In this paper we interpret  functions with $\{0,1\}$-valued formulas, and so the difference between $<$ and $\leqslant$ (or $>$ and $\geqslant$) is negligible and not important.} Let $f\in C(X)$. Then the set $\{w\in 
	X^{2k}:f(w_{2i})< s, ~f(w_{2i+1})> r  \textrm{ for
	} i<k\big\}$ is open. Therefore,  $D_k(A, E,s,r)$ is a union of the sets of the form
$$E^{2k}\cap\{w\in 
X^{2k}:f(w_{2i})< s, ~f(w_{2i+1})> r  \textrm{ for
} i<k\big\}$$
and the set on the right side  is open. This means that  $D_k(A, E,s,r)$ is of the form $E^{2k}\cap O$ where $O$ is an open set. To summarize,  $(\mu^{2k})^*D_k(A, E,s,r)=\mu^{2k}D_k(A, E,s,r)$.

(2): This follows from Proposition~4 of \cite{Talagrand}. Indeed, notice that, by (1) above,  as all functions in $A$ are continuous, for each $k$, $s<r$, and $E\subset X$ measurable,  the set $D_k(A, E,s,r)$ is measurable. This means that $A$ satisfies the condition (M) in \cite[Proposition~4]{Talagrand}. Proposition~4 of \cite{Talagrand} is proved in page 843.
\end{proof}

\begin{Remark}
Recall from \cite[Corollary~3.15]{K3} that  a formula $\phi(x,y)$ has $NIP$ if and only if for any model $M$ and any Keisler $\phi$-measure $\mu(x)$ over $M$ the family $\phi(x,b),b\in M$ is $\mu$-stable.
\end{Remark}

The following theorem is a translation of  \cite[Theorem~465M]{Fremlin4} and asserts that a formula $\phi(x,y)$ has $NIP$ if and only if for any model $M$ and any Keisler $\phi$-measure $\mu(x)$ over $M$ the family $\phi(x,b),b\in M$ is a  Glivenko--Cantelli class with respect to $\mu$. It is easy to verify that Theorem~\ref{fim} is basically the VC-theorem, of course, it is interesting because it is a novel approach for proving the same theorem. In Remark~\ref{generalization}, we explain why our approach has some advantages over the original approach. 

In the rest of paper, for a formula $\phi(x,y)$, a type $p$ with variable $x$ and $|y|$-parameter $b$,
 $\phi(p,b)=1$ if $\phi(x,b)\in p$ and $\phi(p,b)=0$ otherwise.
\begin{Theorem}  \label{fim}
	The following are equivalent:
	\begin{itemize}
		\item[(i)] $\phi$ has $NIP$ with respect to $T$.
		\item[(ii)] For any model $M$ and any measure  $\mu\in{\frak M}_\phi(M)$, $\sup_{b\in M}|\frac{1}{k}\sum_1^k\phi(p_i,b)-\mu(\phi(x,b))|\to 0$ as $k\to\infty$ for almost every $(p_i)\in S_{\phi}(M)^{\Bbb N}$ with the product measure $\mu^{\Bbb N}$. 
	\end{itemize}	
\end{Theorem}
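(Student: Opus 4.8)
The plan is to reduce the statement to Talagrand's functional-analytic characterization of Glivenko--Cantelli classes, via the Fremlin--Talagrand stability notion already recorded in the excerpt. The key dictionary entries are: first, that $\phi$ has $NIP$ iff for every model $M$ and every Keisler $\phi$-measure $\mu(x)$ over $M$ the family $\mathcal{F} = \{\phi(x,b) : b\in M\}$ is $\mu$-stable (the Remark just before the theorem, from \cite[Corollary~3.15]{K3}); and second, that the functions $\phi(x,b)$, viewed on the Stone space $X = S_\phi(M)$, are continuous, so that $\mathcal{F}\subseteq C(X)$ and $\mu$ corresponds to a Radon probability measure on $X$ by Fact~\ref{Dirac}(iii). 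With this setup, the quantity $\sup_{b\in M}\left|\frac1k\sum_1^k\phi(p_i,b) - \mu(\phi(x,b))\right|$ is exactly the uniform deviation of empirical means from true means over the class $\mathcal{F}$, evaluated at a random sample $(p_i)$ drawn i.i.d.\ from $(X,\mu)$. The assertion that this tends to $0$ for $\mu^{\mathbb N}$-almost every $(p_i)$ is precisely the statement that $\mathcal{F}$ is a Glivenko--Cantelli class for $\mu$.

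The core step is then to invoke \cite[Theorem~465M]{Fremlin4} (equivalently the relevant clauses of Talagrand's theorem \cite{Talagrand}), which states that a pointwise bounded family $\mathcal{F}\subseteq C(X)$ is $\mu$-stable if and only if it satisfies the Glivenko--Cantelli property with respect to $\mu$, i.e.\ $\sup_{f\in\mathcal F}\left|\frac1k\sum_1^k f(x_i) - \int f\,d\mu\right|\to 0$ for $\mu^{\mathbb N}$-almost every $(x_i)\in X^{\mathbb N}$. Chaining this with the $NIP \Leftrightarrow \mu$-stability equivalence gives (i)~$\Leftrightarrow$~(ii) directly. For the direction (i)~$\Rightarrow$~(ii): assume $NIP$, fix $M$ and $\mu$, deduce $\mu$-stability of $\mathcal{F}$, apply Theorem~465M to get the almost-sure uniform convergence. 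For (ii)~$\Rightarrow$~(i): if $\phi$ has $IP$, then by the same equivalence there is a model $M$ and a measure $\mu\in\mathfrak M_\phi(M)$ for which $\mathcal F$ is not $\mu$-stable, hence by Theorem~465M the Glivenko--Cantelli conclusion fails for that $\mu$, contradicting (ii); one should take a little care that the $M$ and $\mu$ witnessing failure of stability are of the form allowed in (ii), but any model and any Keisler $\phi$-measure are permitted, so there is nothing to adjust.

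A few technical points will need attention. First, one must check that the $\sup$ over $b\in M$ of the empirical deviation is $\mu^{\mathbb N}$-measurable as a function of $(p_i)$, or else restate the conclusion with outer measure / essential convergence; Talagrand's formulation is designed precisely to avoid extra measurability hypotheses, so I would lean on the fact that stability yields genuine (not merely outer) almost-sure convergence here, citing the measurability machinery inside \cite{Fremlin4} (the functions $\phi(\cdot,b)$ are continuous and $\{0,1\}$-valued, which helps). Second, one must be careful that the sample points $p_i\in S_\phi(M)$ are being plugged into the continuous function $\phi(x,b):S_\phi(M)\to\{0,1\}$ in the same way, so that $\frac1k\sum_1^k\phi(p_i,b)$ is literally the empirical average of the function $q\mapsto \phi(q,b)$ at the points $p_1,\dots,p_k$ — this is just unwinding the definitions. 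Third, one should note that $\mathcal F$ is pointwise bounded (indeed uniformly bounded by $1$), so the hypotheses of Theorem~465M are met.

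The main obstacle is not conceptual but bookkeeping: aligning the precise hypotheses and conclusions of \cite[Theorem~465M]{Fremlin4} (which is stated in the general measure-theoretic language of stable sets and Glivenko--Cantelli classes) with the local model-theoretic formulation here, in particular the translation between Keisler $\phi$-measures over $M$ and Radon measures on the compact space $S_\phi(M)$ (handled by Fact~\ref{Dirac}(iii)) and the measurability subtleties in the $\sup_b$. Once that dictionary is set up carefully, the proof is essentially a citation. I therefore expect the write-up to consist of: (1) set up $X = S_\phi(M)$, identify $\mathcal F \subseteq C(X)$ and $\mu$ as a Radon measure; (2) recall $NIP \Leftrightarrow \mu$-stable for all such $\mu$; (3) quote Theorem~465M to pass between $\mu$-stability and the Glivenko--Cantelli conclusion; (4) assemble both directions.
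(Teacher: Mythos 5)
Your proposal matches the paper's proof essentially verbatim: both directions are obtained by chaining the equivalence ``$\phi$ has $NIP$ iff $\{\phi(x,b):b\in M\}$ is $\mu$-stable for every Keisler $\phi$-measure $\mu$'' (Corollary~3.15 of \cite{K3}) with Theorem~465M of \cite{Fremlin4}, exactly as you outline. The only detail the paper adds that you omit is the remark that, under $NIP$, the $\aleph_0$-saturation hypothesis in the cited corollary can be dropped; otherwise your argument and the paper's coincide.
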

\begin{proof}
	(i)~$\Longrightarrow$~(ii): Let $M$ be a model of $T$, and $\mu(x)$ a Keisler measure over $M$.
	Let  $\bar\mu$ be the completion of $\mu$, that is, the smallest extension of $\mu$ which is complete. (See \cite[Thm 1.9]{Folland} for the definition of the completion of a measure.) Therefore, $\bar\mu$ is a Radon measure in the sense of \cite[411H(b)]{Fremlin4}.
	 By  Corollary~3.15 of \cite{K3}, as $\phi$ is $NIP$ (for theory $T$),  the set $A=\{\phi(x,b):b\in M\}$ is  $\bar\mu$-stable for every Radon measure on $S_{\phi}(M)$, in the sense of \cite[Definition~3.5]{K3}. (Note that as $\phi$ is $NIP$, we do not need to assume $M$ is $\aleph_0$-saturated and the argument of \cite[Corollary~3.15]{K3} works well.) By Theorem 465M of \cite{Fremlin4},  $\sup_{b\in  M}|\frac{1}{k}\sum_1^k\phi(p_i,b)-\bar\mu(\phi(x,b))|\to 0$ for almost every $(p_i)\in S_{\phi}(M)^{\Bbb N}$ with the product measure $\bar\mu^{\Bbb N}$.  Now, note that as $\phi(x,b)$ ($b\in M$) is continuous and $\bar\mu$ is the completion of the Borel measure $\mu$, we have $\mu(\phi(x,b))=\bar\mu(\phi(x,b))$ for all $b\in M$. Therefore,  $\sup_{b\in  M}|\frac{1}{k}\sum_1^k\phi(p_i,b)-\mu(\phi(x,b))|\to 0$ for almost every $(p_i)\in S_{\phi}(M)^{\Bbb N}$ with the product measure $\mu^{\Bbb N}$. (Indeed, recall that every $\bar\mu$-measurable set $E$ is of the form $F\cup N$ where $F$ is $\mu$-measurable and $N$ is a subset of a $\mu$-null set.)
	
	(ii)~$\Longrightarrow$~(i): Again, by Theorem 465M of \cite{Fremlin4}, for any Keisler measure $\mu(x)$ over $M$, the set $A$ above is $\bar\mu$-stable, and so $\phi$ is $NIP$, by Corollary~3.15 of \cite{K3}. (Alternatively, this follows from (v)~$\Longrightarrow$~(ii) of \cite[Theorem~2F]{BFT}, (ii)~$\Longrightarrow$~(i) of \cite[Theorem 465M]{Fremlin4}, and the fact that every function in the pointwise closure of a $\bar\mu$-stable set is $\bar\mu$-measurable   \cite[Proposition~465D(b)]{Fremlin4}.)
\end{proof}

\noindent {\em Explanation.} There are two points.
First: In Theorem~\ref{fim}, the transition of a regular Borel measure (i.e. Keisler measure) to its completion is essential. For this, the following example was suggested to us by  Gilles Godefroy: 
 Take $K$  a compact scattered space which contains a non-Borel subset $E$.\footnote{A space $X$ is scattered if every nonempty subset $A$ of it contains a point isolated in $A$. See \cite{Eng}, page 59.} Let $A$ be the unit ball of $C(K)$, considered as a set of continuous functions on $X$=the unit ball of $C(K)^*$ equipped with the weak*-topology. Then every sequence in $A$ has a pointwise convergent subsequence since $C(K)$ does not contain $\ell_1$ (or directly),  but the characteristic function of $E$ belongs to the pointwise  closure of $A$ and it is not Borel. This means that  the   relative sequential compactness (i.e. the equivalences of \cite[Thm~2F]{BFT}) does not imply Borel definability (of types/measures). Of course, if $X$ is metrizable, then it holds by \cite[Thm~3F]{BFT}. In general (i.e. non-metrizable space $X$), recall that  $NIP$ implies Borel definability, by \cite[Proposition~2.6]{HP}.  
 Second: There is a version of $\mu$-stability in \cite[465S]{Fremlin4}, which is called $R$-stable, so that 
  is more appropriate in some ways. Indeed, Talagrand \cite[Thm~9-4-2]{T84}  showed that a subset $A\subset C(X)$ is $R$-stable if and only if every function in the pointwise closure of $A$ is measurable. The only difference  between these versions is in the definition of product measures. Therefore, all results of $\mu$-stability hold for this (weaker) version, and the argument of Theorem~\ref{fim} can be simplified by using $R$-stability.
  
 \begin{Remark} \label{generalization}
 	(i) There are  other versions of  Theorem~\ref{fim}:
 	\newline
 	(a) For a formula $\phi(x,y)$ and model $M$, the formula $\tilde\phi$ is $NIP$ in $M$ if and only if for any $\mu\in{\frak M}_\phi(M)$, $\sup_{b\in M}|\frac{1}{k}\sum_1^k\phi(p_i,b)-\mu(\phi(x,b))|\to 0$ as $k\to\infty$ for almost every $(p_i)\in S_{\phi}(M)^{\Bbb N}$ with the product measure $\mu^{\Bbb N}$.\footnote{See \cite{KP} for the definition of $NIP$ in a model.}
 	\newline
 	(b)  For a formula $\phi(x,y)$, model $M$ and $\mu\in{\frak M}_\phi(M)$, the set $\{\phi(x,b):b\in M\}$ is $\mu$-stable if and only if $\sup_{b\in M}|\frac{1}{k}\sum_1^k\phi(p_i,b)-\mu(\phi(x,b))|\to 0$ as $k\to\infty$ for almost every $(p_i)\in S_{\phi}(M)^{\Bbb N}$ with the product measure $\mu^{\Bbb N}$.
 	\newline
 	(ii) In Theorem~\ref{fim}, if we consider $X_1,\ldots,X_m$  a finite collection of Borel over $M$ sets, then for  each $i\leq m$, the set $A_i=\{\phi(x,b)\cap X_i:b\in M\}$ is $\mu$-stable (for all Radon measures $\mu$). 
 \end{Remark}

One may demand a better result, that is, a result  for {\em global}  Keisler measures  (i.e. not only for the measures on the Boolean combinations of the instances of $\phi(x,y)$). We respond positively to his/her demand:

\begin{Corollary} \label{global}   Let   $T$ be a countable  theory.
The following are equivalent:
\begin{itemize}
	\item[(i)] $T$ is $NIP$.
	\item[(ii)] For any  model $M$ and any measure  $\mu\in{\frak M}(M)$, there is a set  $X\subseteq S(M)^{\Bbb N}$ of the full measure 1 such that for any formula $\theta(x,\bar y)$,
	 $$\sup_{{\bar b}\in M}|\frac{1}{k}\sum_1^k\theta(p_i,{\bar b})-\mu(\theta(x,{\bar b}))|\to 0$$ as $k\to\infty$ for all $(p_i)\in X$. 
\end{itemize}
\end{Corollary}
\begin{proof}
	Note that the set of all formulas (with the variable $x$) is {\bf countable}. Let $F=\{\theta_1(x),\theta_2(x),\ldots\}$ be an enumeration of them. Fixed 
 $\theta(x,\bar y)\in F$. By Theorem~\ref{fim}, it is easy to verify that the set $X_\theta$ of all $(p_i)\in S(M)^{\Bbb N}$  such that $\sup_{{\bar b}\in M}|\frac{1}{k}\sum_1^k\theta(p_i,{\bar b})-\mu(\theta(x,{\bar b}))|\to 0$ for all  $(p_i)\in X_\theta$ has the measure 1 with respect to $\mu^{\Bbb N}$. Let  $X=\bigcap_{n=1}^\infty X_{\theta_n}$.
 As every measure is continuous from above (cf. \cite[Theorem~1.8]{Folland}(d)), $\mu^{\Bbb N}(X)=\mu^{\Bbb N}(\bigcap_{n=1}^\infty X_{\theta_n})=\lim_k\mu^{\Bbb N}(\bigcap_{n=1}^k X_{\theta_n})=1$. 
This means that there is a set $X\subseteq S(M)^{\Bbb N}$ of the full measure 1 such that for any formula $\theta(x,\bar y)$, $\sup_{{\bar b}\in M}|\frac{1}{k}\sum_1^k\theta(p_i,{\bar b})-\mu(\theta(x,{\bar b}))|\to 0$ as $k\to\infty$ for all  $(p_i)\in X$. 
\end{proof}

The following remark will be used in Corollary~\ref{generalized definable} and Theorem~\ref{almost fam}. Although it is folklore, we provide the proof
for clarity and completeness.
\begin{Remark} \label{rem-density}
	 In Theorem~\ref{fim}, suppose that $N\prec M$ is a small model and $\mu$ is $N$-finitely satisfiable. Let $X$ be the set of all $N$-finitely satisfiable  $\phi$-types over $M$. Then it is easy to verify that
	for each $b\in M$,  $\mu(\phi(x,b))=\mu(\phi(x,b)\cap X)$. In fact, $\mu(X)=1$.\footnote{Assuming $NIP$ for theory, the same holds if  $\mu$ is $N$-invariant and $X$ is  the set of all $N$-invariant  $\phi$-types over $M$ (cf. \cite[Proposition 4.6]{HP}), but not in general.} 
\end{Remark}
\begin{proof}
Let $supp(\mu)$ be the support of $\mu$. As $\mu$ is $N$-finitely satisfiable, every type in $supp(\mu)$ is $N$-finitely satisfiable.
Recall from   \cite[Proposition~2.10]{Gannon-thesis} that $\mu(supp(\mu))=1$. Therefore $\mu(X)=1$. The same holds for $\phi$-measures/types.
\end{proof}

The following is a generalization of Theorem~\ref{DBSC measure}, since $M$ is not necessarily countable.
\begin{Corollary} \label{generalized definable}
	The following are equivalent:
\begin{itemize}
	\item[(i)] $\phi$ has $NIP$ (for theory $T$).
	
	\item[(ii)] For any $\mu(x)\in{\frak M}_\phi({\cal U})$ and any (not necessarily countable) model $M$, if $\mu$ is finitely satisfiable in $M$, then $\mu$ is (generalized) Baire-1/2 definable over $M$. (Cf. Definition~\ref{strong Borel measure}.) 
	
	\item[(iii)]  For any $p(x)\in S_\phi({\cal U})$ and any (not necessarily countable)  model $M$, if $p$ is finitely satisfiable in $M$, then $p$ is (generalized) $DBSC$ over $M$.  (Cf. Definition~\ref{Borel definability} and Fact~\ref{strong Borel=DBSC}.)
\end{itemize}
\end{Corollary}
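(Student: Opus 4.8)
The plan is to run the proof of Theorem~\ref{DBSC measure} once more, the one new feature being that the ``generalized'' classes of Definition~\ref{generalized subclasses} were designed so as not to mention sequences or the Baire hierarchy: a $\{0,1\}$-valued simple function is generalized $DBSC$ exactly when its support is a finite Boolean combination of closed sets, and a function is generalized Baire-$1/2$ exactly when it is a uniform limit of simple $DBSC$ functions. So the only place where countability entered Theorem~\ref{DBSC measure} --- the appeal to \cite[Proposition~5J]{BFT} (or \cite[Theorem~4D]{BFT}) to turn a net of (convex combinations of) instances into a \emph{sequence} --- can simply be dropped. For the two directions (ii)~$\Longrightarrow$~(i) and (iii)~$\Longrightarrow$~(i) there is essentially nothing to do: restricting to a \emph{countable} $M$ makes $S_{\tilde\phi}(M)$ and $S_{\tilde\theta}(M)$ compact metric, where ``generalized $DBSC$'' coincides with $DBSC$ and hence with strong Borel definability (Facts~\ref{DBSC=strong}, \ref{DBSC fact}(v)), and ``generalized Baire-$1/2$'' implies Baire-$1/2$ (since simple $DBSC\subseteq DBSC$); thus (iii) here implies (iii) of Theorem~\ref{DBSC measure} and (ii) here implies (ii) of Theorem~\ref{DBSC measure}, each of which gives (i).

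\noindent\textbf{The direction (i)~$\Longrightarrow$~(ii).} Let $\phi$ be $NIP$ and let $\mu\in{\frak M}_\phi({\cal U})$ be finitely satisfiable in $M$. Fix $\theta\in\Delta_\phi$; by Fact~\ref{Dirac}(v) the function $\mu_\theta$ on $X=S_{\tilde\theta}(M)$ (cf.\ Definition~\ref{strong Borel measure}) lies in the pointwise closure of the family $\mathrm{conv}\{\theta(a,y):a\in M\}$ of continuous functions on $X$. Suppose, for a contradiction, that $\mu_\theta$ is not generalized Baire-$1/2$. The oscillation indices $K_m(\cdot\,;r,s)$, $\alpha(\cdot\,;r,s)$ of \cite[p.~7]{HOR} and the relevant equivalences of \cite[Proposition~2.3]{HOR} --- in the forms ``$f$ is a uniform limit of simple $DBSC$ functions $\Leftrightarrow$ $\alpha(f;r,s)<\omega$ for all $r<s$'' and, for $\{0,1\}$-valued $f$, ``$f^{-1}(1)$ is a finite Boolean combination of closed sets $\Leftrightarrow$ $\alpha(f;0,1)<\omega$'' --- hold verbatim over the (possibly non-metrizable) compact space $X$. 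So there are $r<s$ with $K_m(\mu_\theta;r,s)\neq\emptyset$ for every $m$; since every member of the approximating family is continuous, the argument in the last paragraph of the proof of Lemma~\ref{bounded variation-continuous}(iv) (using \cite[Lemma~3.1]{HOR}) then yields a sequence $(g_n)$ of convex combinations of instances of $\theta$ on which the independence property is not semi-uniformly blocked, contradicting Lemma~\ref{bounded variation-continuous}(ii) for $\theta$ (which is $NIP$, as $\phi$ is). As $\theta\in\Delta_\phi$ was arbitrary, $\mu$ is generalized Baire-$1/2$ definable over $M$.

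\noindent\textbf{The direction (i)~$\Longrightarrow$~(iii).} This is a special case: a $\phi$-type $p$ finitely satisfiable in $M$ yields, via Fact~\ref{Dirac}(ii), the Dirac measure $\delta_p\in{\frak M}_\phi({\cal U})$, still finitely satisfiable in $M$, and $(\delta_p)_\phi=p_\phi$; by (i)~$\Longrightarrow$~(ii) the function $p_\phi$ is generalized Baire-$1/2$, and being $\{0,1\}$-valued it is then generalized $DBSC$, i.e.\ $p$ is strongly Borel definable over $M$ (Fact~\ref{strong Borel=DBSC}). Alternatively, one reruns the previous paragraph with $\phi$ in place of $\theta$ and ordinary instances in place of convex combinations, using Lemma~\ref{bounded variation-continuous}(iii) (and the $\{0,1\}$-valued form of the $\alpha$-index) rather than parts (ii),(iv), exactly as in the proof of (i)~$\Longrightarrow$~(iii) of Theorem~\ref{DBSC measure}.

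\noindent\textbf{Main obstacle.} The only step that is not bookkeeping is the transfer just invoked: one must check that an infinite oscillation index of the \emph{net}-limit $\mu_\theta$ (resp.\ $p_\phi$) is genuinely witnessed, at each finite stage $m$, by \emph{finitely many} actual convex combinations (resp.\ instances) of $\theta$, and that the approximate shattering levels $r'_m<s'_m$ arising at stage $m$ can be collapsed to a single pair $r'<s'$, so that compactness delivers $IP$ for $\theta$; this has to be run level-by-level as in Lemma~\ref{bounded variation-continuous}(iv), and is precisely what makes it unnecessary to extract a convergent subsequence from the net. A second, arguably cleaner route to (i)~$\Longrightarrow$~(ii) would go through the frequency interpretation Theorem~\ref{fim}: for fixed $\theta$, take a $\mu^{\Bbb N}$-generic $(p_i)$ with $\sup_{b\in M}|\frac{1}{k}\sum_1^k\theta(p_i,b)-\mu(\theta(x,b))|\to 0$, so that $\mu_\theta$ is a \emph{uniform} limit of the averages $\frac{1}{k}\sum_1^k\theta(p_i,y)$; each $p_i$ is generalized $DBSC$ over $M$ by part (iii), hence so is each finite average, and it only remains to verify that this class is closed under the relevant passage to the limit.
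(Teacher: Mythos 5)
Your ``second, arguably cleaner route'' is in fact the paper's proof: the paper proves (i)$\Rightarrow$(ii) by applying Theorem~\ref{fim} (together with Remark~\ref{rem-density}) to get, for each $k$, a tuple of $M$-finitely satisfiable global $\phi$-types whose average approximates $\mu$ \emph{uniformly} in $b$ to within $\epsilon$; each such type is generalized $DBSC$ over $M$ by the (localized) Hrushovski--Pillay argument \cite[Proposition~2.6]{HP}, a finite average of $\{0,1\}$-valued generalized $DBSC$ functions is a simple $DBSC$ function, and a uniform limit of simple $DBSC$ functions is generalized Baire-1/2 \emph{by definition} (Definition~\ref{generalized subclasses}(ii)) --- that last step, which you leave as ``it only remains to verify,'' is immediate. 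The paper gets (i)$\Rightarrow$(iii) directly from \cite[Proposition~2.6]{HP} rather than from (ii); note that in the $\Theta$-via-Theorem~\ref{fim} route you must do it this way, since (iii) is an input to (ii) there and deriving (iii) from (ii) via Dirac measures would be circular. Your treatment of the converse directions (restrict to countable $M$, where generalized $DBSC$ coincides with $DBSC$ and generalized Baire-1/2 lands in $B_{1/2}$, and quote Theorem~\ref{DBSC measure}) matches the paper exactly.

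Your \emph{primary} route --- rerunning the oscillation-index argument of Theorem~\ref{DBSC measure} directly over the non-metrizable Stone space --- is genuinely different, and it has one load-bearing claim that you assert but do not justify: that the equivalences of \cite[Proposition~2.3]{HOR} ``hold verbatim'' on an arbitrary compact Hausdorff space. For $\{0,1\}$-valued functions this is fine (the finite difference hierarchy and its separation rank are purely topological), so the type case goes through; but for real-valued functions the equivalence ``uniform limit of simple $DBSC$ functions $\Leftrightarrow$ $\alpha(f;r,s)<\omega$ for all $r<s$'' is proved in \cite{HOR} only in the separable/metrizable setting, and it is precisely this that the paper's Definition~\ref{generalized subclasses} and its Glivenko--Cantelli argument are designed to avoid: the paper never needs any index characterization of generalized Baire-1/2, because Theorem~\ref{fim} hands it the uniform approximants directly. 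So your first route should be regarded as conditional on that extension of \cite{HOR} (or restructured to contradict $NIP$ of the randomized formula by compactness, as in the last paragraph of the proof of Lemma~\ref{bounded variation-continuous}(iv), rather than ``contradicting Lemma~\ref{bounded variation-continuous}(ii),'' whose conclusion only asserts the existence of \emph{some} well-behaved subsequence). What your route would buy, if completed, is independence from Talagrand/Fremlin 465M; what the paper's route buys is that all the non-metrizable difficulty is absorbed into the definition of the generalized classes.
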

\begin{proof}
	(i)~$\Longrightarrow$~(ii): By Theorem~\ref{fim}, for each $\epsilon>0$, there is $k$ such that 
	$\sup_{b\in \cal U}|\frac{1}{k}\sum_1^k\phi(p_i,b)-\mu(\phi(x,b))|\leq\epsilon$  for almost every $(p_i)\in S_{\phi}({\cal U})^{k}$ with the product measure $\mu^{k}$. By Remark~\ref{rem-density}, 
	note that $\mu(S_{\phi}({\cal U}))=\mu(X)$
	 where $X$ is the set of all all $M$-finitely satisfiable global $\phi$-types.
	 Recall from \cite[Proposition~2.6]{HP} that every invariant global type is strongly Borel definable. It is easy to check that every finitely satisfiable $\phi$-type (in a not necessarily $NIP$ theory) can be extended to a global finitely satisfiable type. Therefore, using an argument similar to Proposition~2.6 of \cite{HP},  one can show that every $p\in S_{\phi}({\cal U})$ is strongly Borel definable over $M$.
	 For each $k$, let $(p_1^k,\ldots,p_k^k)\in X^k$ such that $\sup_{b\in \cal U}|\frac{1}{k}\sum_1^k\phi(p_i^k,b)-\mu(\phi(x,b))|\leq\epsilon$. Let
	$f_k(y)=\frac{1}{k}\sum_1^kg_{i,k}(y)$ where $g_{i,k}$ defines $p_i^k$. Clearly, as the $g_{i,k}$'s are $DBSC$, $f_k$ is  $DBSC$. Therefore, the functions $f_k$'s ($k<\omega$) are strongly Borel and the sequence $(f_k)$ uniformly converges to a function $f$ which defines $\mu$. This means that $\mu$ is  (generalized) Baire-1/2 definable.

	(i)~$\Longrightarrow$~(iii) follows from Proposition~2.6 of \cite{HP}.
	
	(ii)~$\Longrightarrow$~(i) and (iii)~$\Longrightarrow$~(i) follow from the directions  (ii)~$\Longrightarrow$~(i) and (iii)~$\Longrightarrow$~(i) of Theorem~\ref{DBSC measure} above.
\end{proof}

\begin{Remark}
	(i) Assuming $NIP$ for the theory, the above result holds for invariant measures rather than coheirs. (See \cite{HP}, Lemma~4.8 and Corollary~4.9.)
	
	\noindent(ii) When $M$ is countable, the above corollary gives an alternative proof the direction (i)~$\Longrightarrow$~(ii) of Theorem~\ref{DBSC measure}.
	
		\noindent(iii) The interesting thing is that in the proofs of Corollary~\ref{generalized definable} and Theorem~\ref{DBSC measure}  we did not use the VC Theorem
		that was used on page 1025 of \cite{HP}.
\end{Remark}

The following is similar to an observation made by Kyle Gannon \cite[Corollary~3.10]{Gannon sequential}. In Remark~\ref{Karim-G}, we further discuss its connection to Gannon's observation.

In this paper, when we are discussing subalgebras in the context of the Stone--Weierstrass theorem the $r_i$ actually  can be negative, but in the context of a convex hull the must be non-negative. (Cf. Theorem~\ref{almost fam} and Proposition~\ref{SW-2} below.)
\begin{Theorem} \label{almost fam} Assume that $\phi(x,y)$ is $NIP$ and $\mu\in{\frak M}_\phi({\cal U})$ is finitely satisfiable in a (not necessarily countable) model $M$. Let $\nu$ be any local Keisler measure in ${\frak M}_{\tilde{\phi}}(M)$ and  $\epsilon>0$.

\noindent (i) There exists a $\nu$-measurable set $C\subseteq S_{\tilde{\phi}}(M)$ such that:

(1)  $\nu(C)>1-\epsilon$, and 

(2) there exist $\bar a_1,\ldots,\bar a_k\in M^{<\omega}$ such that for all $b$ (in the monster model) such that $tp_{\tilde{\phi}}(b/M)\in C$, we have: $$\Big|\sum_{i=1}^k r_i\cdot\theta_i(\bar a_i,b)-\mu(\phi(x,b))\Big|<\epsilon, \ \ \  (*)$$ where   $\theta_i$ is of the form $\bigwedge_j\phi(a_j,y)$  ($a_j\in \bar a_i$ and $k<\omega$ and $r_i\in{\Bbb R}$), or $\theta_i$ is the tautology  $\forall x(x=x)$. 

\noindent  (ii) Suppose moreover that $M$ is countable, then 
 
 (a)  there exist a measurable set $C$ with $\nu(C)>1-\epsilon$ and  $a_1,\ldots,a_n\in M$ such that for all $b$ (in the monster model) such that $tp_{\tilde{\phi}}(b/M)\in C$, we have: $|\sum_1^n r_i\cdot\phi(a_i,b)-\mu(\phi(x,b))|<\epsilon$    where $\sum_1^n r_i=1, r_i\in{\Bbb R}^+$.
 
  (b) Consequently,  there is a measurable subset $E\subseteq S_{\tilde{\phi}}(M)$ with $\nu(E)>1-\epsilon$ and Baire-1/2 function $f$ on $S_{\tilde{\phi}}(M)$ such that $\mu(\phi(x,b))=f(b)$ for all $b\in \cal U$, $tp_{\tilde{\phi}}(b/M)\in E$. \end{Theorem}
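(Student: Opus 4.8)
The plan is to deduce Theorem~\ref{almost fam} from the Glivenko--Cantelli statement of Theorem~\ref{fim} together with the density/support observation in Remark~\ref{rem-density}. First I would fix the $NIP$ formula $\phi$, the global $\phi$-measure $\mu$ finitely satisfiable in $M$, an auxiliary local Keisler measure $\nu$ on $S_{\tilde\phi}(M)$, and $\epsilon>0$. The key point is that $\mu$ restricted to the space $X$ of $M$-finitely satisfiable $\phi$-types has full mass (Remark~\ref{rem-density}), so by Fact~\ref{Dirac}(v) the function $q\mapsto\mu(\phi(x,b))$ (for $b\models q$, $q\in S_{\tilde\phi}(M)$) lies in the pointwise closure of the convex hull of $\{\theta(a,y):a\in M,\ \theta\in\Delta_\phi\}$; in particular, for every $\delta>0$ there is an \emph{averaged} function $g=\frac1m\sum_{i=1}^m\theta_i(\bar a_i,y)$ with $\theta_i$ a conjunction of instances $\phi(a_j,y)$ and $\bar a_j\in M$, such that $g$ approximates $q\mapsto\mu(\phi(x,b))$ well on a large set. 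To get this ``large set'' with respect to the arbitrary measure $\nu$, I would apply Theorem~\ref{fim} to $\nu$ in place of $\mu$: since $\phi$ is $NIP$, for $\nu^{\Bbb N}$-almost every sequence $(q_i)\in S_{\tilde\phi}(M)^{\Bbb N}$ one has $\sup_{b}|\frac1k\sum_1^k\phi(q_i,b)-\nu(\phi(x,b))|\to0$, but what I actually need is the Fremlin--Talagrand stability of the family $\{\phi(x,b):b\in M\}$ with respect to $\nu$ (Remark after Definition~\ref{Talagrand-stable}, i.e. Corollary~3.15 of \cite{K3}), which via Theorem~465M of \cite{Fremlin4} yields finitely many points $q_1,\dots,q_k\in S_{\tilde\phi}(M)$ and a set $C$ of $\nu$-measure $>1-\epsilon$ on which the empirical average $\frac1k\sum\phi(q_i,\cdot)$ is within $\epsilon$ of $\mu(\phi(x,\cdot))$ uniformly in $b$. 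Rewriting each $\phi(q_i,b)$ as $\theta_i(\bar a_i,b)$ with $\bar a_i\in M$ (possible because the $q_i$ are finitely satisfiable in $M$, so each indicator $\phi(q_i,\cdot)$ agrees with an instance $\phi(a,\cdot)$, $a\in M$, on the relevant piece) gives part (i) with $m=k$; the slightly more general bookkeeping with $k\le m$ is just rescaling.

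For part (ii)(a), when $M$ is countable $S_{\tilde\phi}(M)$ is Polish, and I would upgrade the convex-combination approximation to an approximation by a \emph{sum} $\sum_1^n\phi(a_i,y)$ of genuine instances. This is exactly the extra input available in the countable/Polish setting: by Proposition~5J of \cite{BFT} one can replace pointwise-closure-of-convex-hull by pointwise-closure-of-the-set-itself on a set of full measure, or equivalently use that on the $\nu$-large set one may clear denominators. Concretely, on $C$ the function $\mu(\phi(x,\cdot))$ is within $\epsilon$ of $\frac1m\sum_1^m\phi(a_i,\cdot)$; absorbing the $1/m$ (or re-choosing the sampled points with rational weights) produces the stated form $|\sum_1^n\phi(a_i,b)-\mu(\phi(x,b))|<\epsilon$ for all $b$ with $tp_{\tilde\phi}(b/M)\in C$.

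Part (ii)(b) is then a soft consequence: apply (ii)(a) with $\epsilon_j=2^{-j}$ to get sets $C_j$ with $\nu(C_j)>1-\epsilon 2^{-j}$ and finite sums $h_j=\sum\phi(a_i^j,y)$; each $h_j$ is a simple, finite Boolean-combination-of-clopen function, hence $DBSC$ by Fact~\ref{DBSC=strong}, and on $E:=\bigcap_j C_j$ (with $\nu(E)>1-\epsilon$) the $h_j$ converge \emph{uniformly} to $q\mapsto\mu(\phi(x,b))$ since $\sup_{E}|h_j-\mu(\phi(x,\cdot))|\le 2^{-j}$. A uniform limit of simple $DBSC$ functions is Baire-1/2 by Definition~\ref{subclasses}(v); restricting to $E$ (or, if one wants $f$ defined on all of $S_{\tilde\phi}(M)$, extending by a $D$-norm-bounded argument and invoking that $B_{1/2}$ is a Banach algebra, Fact~\ref{properties}) gives the required Baire-1/2 function $f$ with $f(b)=\mu(\phi(x,b))$ on $E$. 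The main obstacle I anticipate is step (ii)(a): passing from convex combinations of instances to honest sums/averages of instances on a set of nearly full $\nu$-measure without losing the uniformity in $b$ — this is where countability (Polishness of $S_{\tilde\phi}(M)$, hence applicability of \cite[Prop.~5J]{BFT}) is genuinely used, and one must be careful that the denominator-clearing does not destroy the approximation bound; everything else is routine translation of Theorem~\ref{fim} and Remark~\ref{rem-density}.
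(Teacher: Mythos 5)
Your top-level skeleton is right: Theorem~\ref{fim} together with Remark~\ref{rem-density} does produce global $M$-finitely satisfiable types $p_1,\dots,p_n$ whose average approximates $\mu(\phi(x,b))$ uniformly in $b$. But the heart of the theorem is the step you elide, and your substitute for it fails. Each $p_i$, viewed via Fact~\ref{Dirac}(i) as a $\{0,1\}$-valued function on $S_{\tilde{\phi}}(M)$, is only a \emph{pointwise limit} of instances $\phi(a,y)$, $a\in M$; it is in general discontinuous, and it does not ``agree with an instance $\phi(a,\cdot)$ on the relevant piece.'' Finite satisfiability gives, for each single $b$ with $\phi(x,b)\in p_i$, some $a\in M$ with $\models\phi(a,b)$, but that $a$ depends on $b$; finding one tuple from $M$ that works simultaneously on a $\nu$-large set of $b$'s is precisely what must be proved. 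The paper's proof of (i) does this by applying Lusin's theorem (with respect to $\nu$) to replace each $p_i$ by a continuous function $f_i$ off a set of small $\nu$-measure, and then the Stone--Weierstrass theorem to approximate $f_i$ uniformly by elements of the algebra generated by $\{\phi(a,y):a\in M\}$ --- this is exactly where the conjunctions $\theta_i=\bigwedge_j\phi(a_j,y)$ in the statement come from, and your argument has no source for them. A second confusion in your part (i): you invoke Fremlin--Talagrand stability / Theorem~465M ``with respect to $\nu$'' to get points $q_1,\dots,q_k\in S_{\tilde{\phi}}(M)$ whose empirical average approximates $\mu(\phi(x,\cdot))$. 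Empirical averages of points sampled from $\nu$ converge to $\nu$, not to $\mu$, and those $q_i$ are $\tilde{\phi}$-types in the $y$-variable, i.e.\ on the wrong side of the duality. The sampling must be done from $\mu$ on $S_\phi({\cal U})$; the measure $\nu$ enters only through Lusin/Egorov as the gauge of the exceptional set.

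The same gap recurs in (ii)(a): once Proposition~5J of \cite{BFT} (using countability, hence Polishness of $S_{\tilde{\phi}}(M)$) gives a \emph{sequence} of convex combinations of instances converging pointwise to $p_i$, you still need Egorov's theorem to convert pointwise convergence into uniform convergence off a set of small $\nu$-measure; ``clearing denominators'' is not a mechanism, and Proposition~5J does not let you ``replace closure of the convex hull by closure of the set itself.'' Part (ii)(b) is essentially correct as you describe it, except that the statement asks for a Baire-1/2 function defined on all of $S_{\tilde{\phi}}(M)$; the clean route, which the paper takes, is Lemma~\ref{bounded variation-continuous}: by $NIP$ the sequence of averages $Av(\bar a_n)(\phi(x,y))$ converges (after passing to a subsequence) everywhere on $S_{\tilde{\phi}}(M)$ to a Baire-1/2 function, which then agrees with $\mu(\phi(x,\cdot))$ on $E$. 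Your proposed ``$D$-norm-bounded extension'' is neither needed nor justified as stated.
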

\begin{proof} (i): By Theorem~\ref{fim} and Remark~\ref{rem-density}, there are types $p_1,\ldots,p_n\in S_\phi({\cal U})$, which are finitely satisfiable in $M$ and $$\big|\frac{1}{n}\sum_1^n\phi(p_i,b)-\mu(\phi(x,b))\big|<\epsilon/2.$$ 

Recall from Fact~\ref{Dirac} that every $p_i$ corresponds to a function on $S_{\tilde{\phi}}(M)$, denoted by $p_i$ again,  which  defines it. (Notice that the such functions are Borel measurable by Corollary~\ref{generalized definable}(iii).) By Lusin's theorem (\cite[Theorem~7.10]{Folland}),
for each $i\leq n$, 
 there are a continuous functions $f_i$'s   on $S_{\tilde{\phi}}(M)$ and a $\nu$-measurable set $C_i$ such that $\nu(S_{\tilde{\phi}}(M)\setminus C_i)<\frac{\epsilon}{n}$ and $f_i(q)=p_i(q)$ for all $q\in C_i$. By the Stone--Weierstrass theorem (\cite[Thm.~4.45]{Folland}), for any $i\leq n$ there is a function $\psi_i(y)$ of the form $\sum_1^kr_i\cdot\theta_i(\bar a_i,y)$ (where  $\theta_i$ is of the form $\bigwedge_j\phi(a_j,y)$ and $a_j\in \bar a_i$, $r_i\in{\Bbb R}$) such that $\sup_{b\in S_{\tilde{\phi}}(M)}|f_i(q)-\psi_i(q)|<\epsilon/2$.\footnote{Notice that the set of such functions is an algebra and it separates points in $S_{\tilde{\phi}}(M)$. It also contains the constant function 1.} Let $C=\bigcap_1^n C_i$. Clearly, $\nu(C)>1-\epsilon$.
 So putting everything together we have
 \begin{align*}
 \Big|\frac{1}{n}\sum_1^n\psi_i(b)-\mu(\phi(x,b))\Big| & \leqslant \Big|\frac{1}{n}\sum_1^n\phi(p_i,b)-\mu(\phi(x,b))\Big| \\
 &  + \Big|\frac{1}{n}\sum_1^n\phi(p_i,b)-\frac{1}{n}\sum_1^n f_i(b)\Big|  \\
 &  +
 \Big|\frac{1}{n}\sum_1^n f_i(b)-\frac{1}{n}\sum_1^n\psi_i(b)\Big| <\epsilon/2+\epsilon/2 
 \end{align*}
 for all $b$ (in the monster model) such that $tp_{\tilde{\phi}}(b/M)\in C$. 
This proves  the desirable result. 

(ii): Suppose moreover that $M$ is countable. 

(a): Note that every $p_i$ ($i\leq n$) in  $(*)$ above is in the closure of the convex hull of the $\phi(a,y)$'s ($a\in M$). As $M$ is countable, by Proposition~5J of \cite{BFT}, any $p_i$ is the limit of a {\bf sequence} $(g_{n,i})$ (as $n\to\infty$) where $g_{n,i}$ is of the form $\sum r_{n,j}\cdot\phi(a_j,y)$ ($a_j\in M$ and $\sum r_{n,j}=1$, $r_{n,j}>0$). (Use the equivalence (vi)$\Longleftrightarrow$(viii) of Theorem~4D of \cite{BFT}.)   For each $i\leq n$, by Egorov's theorem, there exists a $\nu$-measurable set $C_i$ with $\nu(C_i)>1-\epsilon/n$ such that $(g_{n,i})$ {\bf uniformly}  converges to $p_i$ on $C_i$. Let $C=\bigcap_1^n C_i$. Again, putting everything together the desirable result follows.

(b): As $M$ is countable, by the previous paragraph, set $\bar{a}_n=(a_1^n,\ldots,a_k^n)\in M$ and $\nu$-measurable $C_n \subseteq S_{\tilde{\phi}}(M)$ such that $\nu(C_n)>1-\epsilon/2^n$ and $|\sum_1^k r_i\cdot\phi(a_i^n,b)-\mu(\phi(x,b))|<\frac{1}{n}$ for $b\in\cal U$, $tp_{\tilde{\phi}}(b/M)\in C_n$. Let $E=\bigcap_{n=1}^\infty C_n$. By Lemma~\ref{bounded variation-continuous}(ii), there is a {\bf sub}sequence of $\big(Av(\bar{a}_n)(\phi(x,y))\big)$ which is convergent to a Baire-1/2 function $f(y)$ on $S_{\tilde{\phi}}(M)$. Clearly, $\mu(\phi(x,b))=f(b)$ for all $b\in \cal U$ such that $tp_{\tilde{\phi}}(b/M)\in E$ and $\nu(E)>1-\sum_{n=1}^\infty \epsilon/2^n=1-\epsilon$. 
 \end{proof}
\begin{Remark} \label{Karim-G}  (i): Notice that in part (i) above the real numbers $r_i$ can be negative. We don't know if we can consider only positive numbers  or not.
	
\noindent (ii): As mentioned above, the  part (ii)(a) of Theorem~\ref{almost fam} (i.e. without Baire-1/2 definability) is similar to  \cite[Corollary~3.10]{Gannon sequential}. Indeed, as $T$ is countable in  \cite{Gannon sequential}, every sequentially approximated measure is finitely satisfiable in a {\bf countable} model (cf. Proposition~3.3(i) of \cite{Gannon sequential}). Of course, he has made no assumptions about the formula $\phi$, and we have made no assumptions about the measure $\mu$. (As mentioned in the paragraph before \cite[Corollary~3.10]{Gannon sequential}, the latter results is similar to {\em almost definability} of coheirs in \cite{K3}.)

\noindent (iii): The regularity of $\nu$ has been used for the general case (i.e., not necessarily countable models). However, in countable case, regularity is established automatically; that is, every Borel probability measure on a  metric space is regular. 

\end{Remark}

The following is another application of the Stone--Weierstrass theorem:
\begin{Proposition} \label{SW-2} Assume that $\phi(x,y)$ is a (not necessarily $NIP$) formula, $\mu\in{\frak M}_\phi({\cal U})$ and $M$ a model. If $\mu$  definable over $M$, then for any $\epsilon>0$  there exist $\bar a_1,\ldots,\bar a_k\in M^{<\omega}$ such that for all $b$ (in the monster model), we have: $$\Big|\sum_{i=1}^k r_i\cdot\theta_i(\bar a_i,b)-\mu(\phi(x,b))\Big|<\epsilon \ \text{for all } b\in{\cal U}, \ \ \boxtimes$$ where   $\theta_i$ is of the form $\bigwedge_j\phi(a_j,y)$  ($a_j\in \bar a_i$ and $k<\omega$ and $r_i\in{\Bbb R}$), or $\theta_i$ is the tautology $\forall x(x=x)$. 
\end{Proposition}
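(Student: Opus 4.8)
\emph{Proof idea.} The plan is to use the definability of $\mu$ to collapse the problem to a statement about a single continuous function, and then to apply the Stone--Weierstrass theorem. Put $X=S_{\tilde\phi}(M)$; this is a compact Hausdorff, indeed zero-dimensional, space, and the functions $\phi(a,y)\colon X\to\{0,1\}$ for $a\in M$ are continuous and generate the clopen algebra of $X$. Since $\mu$ is definable over $M$, the map $f\colon X\to[0,1]$ given by $f(q)=\mu(\phi(x,b))$ for some (any) $b\models q$ is well defined and \emph{continuous} (this is exactly what definability of a measure means; cf.\ Definition~\ref{strong Borel measure} and \cite[\S7]{Simon}), so $f\in C(X)$. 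Any conjunction of instances of $\phi$, evaluated at a tuple $\bar a$ from $M$, is likewise a continuous $\{0,1\}$-valued function on $X$, and the quantity $\sup_{b\in{\cal U}}|\cdots|$ in $\boxtimes$ is just the supremum over $q\in X$ of the corresponding functions. Hence it suffices to prove: for every $\epsilon>0$, $f$ lies within $\epsilon$, in the uniform norm of $C(X)$, of some function of the form $\tfrac1m\sum_{l=1}^{k}\theta_l(\bar a_l,y)$ of the allowed shape with $k\le m$.

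The family $A=\{\phi(a,y):a\in M\}$ separates the points of $X$ (two distinct $\tilde\phi$-types over $M$ disagree on some instance $\phi(a,y)$), so the \emph{unital} subalgebra ${\cal A}$ of $C(X)$ generated by $A$ separates points and contains the constants; by the Stone--Weierstrass theorem $\overline{\cal A}=C(X)$. Since each $\phi(a,y)$ is idempotent, every element of ${\cal A}$ is a finite $\R$-linear combination of finite conjunctions $\bigwedge_j\phi(a_j,y)$ (the empty conjunction being the constant $1$). Therefore there are such conjunctions $\tau_1,\dots,\tau_N$, with parameters in $M$, and reals $c_1,\dots,c_N$ with $\|f-\sum_i c_i\tau_i\|_\infty<\epsilon/2$. (Equivalently, since $X$ is zero-dimensional one may from the start approximate $f$ uniformly by a clopen step function $\sum_j d_j\chi_{W_j}$ with the $W_j$ disjoint clopen sets from the algebra generated by finitely many instances of $\phi$ and $d_j\in[0,1]$.)

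It remains to turn such an approximant into the normalized $\{0,1\}$-combination $\boxtimes$. Let $g=\sum_i c_i\tau_i$ with $\|f-g\|_\infty<\epsilon/2$, let $P\subseteq M$ be the finite set of parameters occurring in it, and let $W_1,\dots,W_p$ be the nonempty atoms of the finite clopen algebra generated by $\{[\phi(a,y)]:a\in P\}$, so that each $\chi_{W_j}$ is a conjunction of literals of $\phi$ over $P$, $\sum_j\chi_{W_j}=1$, and $g=\sum_j d_j\chi_{W_j}$ for suitable $d_j\in\R$. Because $g$ is within $\epsilon/2$ of $f\in[0,1]$, every $d_j$ lies in $(-\epsilon/2,\,1+\epsilon/2)$; choosing $m\in\N$ with $1/m<\epsilon/2$ and replacing each $d_j$ by the nearest point $n_j/m$ of $\{0,1/m,\dots,1\}$ yields $g'=\sum_j (n_j/m)\chi_{W_j}$ with $\|f-g'\|_\infty<\epsilon$. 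Writing $g'=\tfrac1m\sum_{l=1}^{m}\chi_{V_l}$ with $V_l=\{q\in X:g'(q)\ge l/m\}=\bigcup\{W_j:n_j\ge l\}$, each $V_l$ is a Boolean combination — a finite disjoint union of conjunctions of literals — of instances $\phi(a,y)$ with $a\in M$, and $k=m$; this is the required form. If one insists on literal conjunctions $\bigwedge_j\phi(a_j,y)$, one expands each $V_l$ into its atoms and lists them with multiplicity, exactly as in the proof of Theorem~\ref{almost fam}(i).

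Conceptually there is no real obstacle: once definability places the problem inside $C(X)$, the statement is precisely ``another application of the Stone--Weierstrass theorem''. The only point that needs care is the bookkeeping of the previous paragraph — manufacturing a genuine average of $\{0,1\}$-valued conjunctions, with the normalization $k\le m$, out of a real-coefficient combination — and this runs entirely parallel to Theorem~\ref{almost fam}(i). I would also remark that it is definability, not finite satisfiability, that drives the argument: finite satisfiability alone would not suffice, since, for example, in the theory of the random graph there is a $\phi$-measure ($\phi=xRy$) that is constant $=\tfrac12$ on all instances but is not a uniform limit of plain averages $\tfrac1n\sum_i\phi(a_i,y)$ — one really needs the larger algebra that Stone--Weierstrass exploits via the continuity of $f$.
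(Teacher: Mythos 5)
Your proof is correct and follows essentially the same route as the paper's: definability of $\mu$ over $M$ makes $q\mapsto\mu(\phi(x,b))$ a continuous function on $S_{\tilde\phi}(M)$, and Stone--Weierstrass then gives uniform approximation by the algebra generated by the instances $\phi(a,y)$. The paper leaves the final bookkeeping (converting a real-linear combination of conjunctions into a normalized average with $k\le m$) implicit, and your atoms-and-rounding argument supplies exactly that missing detail.
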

\begin{proof} As $\mu$ is definable over $M$, its corresponding function on $S_{\tilde{\phi}}(M)$ is continuous. By the Stone--Weierstrass theorem, it is in the uniform closure of the algebra generated by formulas $\phi(a,y),a\in M$. So $\boxtimes$ holds.
\end{proof}

\begin{Remark}
	(i) The	notion ``$NIP$ of $\phi(x,y)$ in a model"  was introduced in \cite{K3} and  more applications of it were presented in \cite{KP}. Some of the above results in the present  article-- not all them-- can be proved by the weaker assumption `$NIP$ in a model'. For this, one can use
	Fact~\ref{Dirac}  above and  Propositions~5I and 5J of \cite{BFT}.
	\newline
	(ii) As the types in {\em continuous} logic correspond to the measures in {\em classical} logic, all results of \cite[Section~3]{K3} can be translated for measures in classical logic. 
\end{Remark}

\subsection*{Thesis}
In \cite{K-Baire} we suggested a   hierarchy of unstable $NSOP$ theories using subclasses  of Baire~1 functions. The results of the present paper lead to a new hierarchy in $NIP$ theories. Indeed, let $X$ be a compact Polish space and $\cal C$ a class  of (real-valued) Baire~1 functions on $X$. Suppose moreover that $DBSC\subseteq{\cal C}\subseteq \text{Baire-}1/2$. (Recall from \cite{HOR} and \cite{KL} that there are many `certain' such classes.) We say that a complete theory $T$ is a $\cal C$-class (or is $\cal C$) if 

``for any formula $\phi(x,y)$ and any countable model $M$, every $M$-finitely satisfied Keisler $\phi$-measure $\mu$ over $M$ is $\cal C$-definable, that is, the functions that define $\mu$ are in $\cal C$."

\medskip
Several questions arise: 

\begin{Question}
	 Are there any unstable $NIP$ theories which are $DBSC$?  If so, Are there interesting such theories?  Is there any (model theoretic) characterization of $DBSC$ theories? Does it separate interesting theories? There are similar questions for each class ${\cal C}$ with $DBSC\subseteq{\cal C}\subseteq \text{Baire-}1/2$.
\end{Question}

\section{Angelic spaces of global types/measures}

In this section, we apply the work of Bourgain-Fremlin-Talagrand on Rosenthal compacta to give a short proof (and a refinement) of a    recent result of Gannon \cite{Gannon sequential}  (which was based on a result due to Simon \cite{S-invariant}).

\medskip
The key here is how to capture a ``global" type/measure with a {\em suitable} function. By ``global" we mean for {\em all} formulas, not just {\em one}.  Recall that Ben~Yaacov \cite{Ben-Groth} and Pillay \cite{Pillay}  had previously proposed a suitable function for the {\em local} types and {\em local} coheirs, respectively. We intend here to provide a {\em  suitable} generalization of this function that is appropriate for {\em global} types/measures. 

\medskip
First we recall some notations and notions. Given a model $M$ and types $p_1(x),\ldots,p_n(x)$ over $M$, the average measure of them, denoted by $\text{Av}(p_1,\ldots,p_n)$, is defined as follows:
$$\text{Av}(p_1,\ldots,p_n)(\theta(x)):=\frac{\big|\{i:~\theta(x)\in p_i, i\leq n\}\big|}{n}   \ \  \ \text{for all formula } \theta(x)\in L(M).$$
If $a_1,\ldots,a_n$ are elements in some model, $\text{Av}(a_1,\ldots,a_n):=\text{Av}(p_1,\ldots,p_n)$ where $a_i\models p_i$ ($i\leq n$). Similarly, given measures $\mu_1(x),\ldots,\mu_n(x)$ over $M$, the average measure of them, denoted by $\text{Av}(\mu_1,\ldots,\mu_n)$, is defined as follows:
$\text{Av}(\mu_1,\ldots,\mu_n)(\theta(x)):=\frac{1}{n}\sum_1^n\mu_i(\theta(x))$ for all formula   $\theta(x)\in L(M)$.

\medskip
Recall that for a topological space $X$ the subset $A$ of $X$ is 
 {\em  relatively countably compact} (in $X$) if every countable subset of $A$ has a cluster point in $X$. $A$ is   {\em relatively compact} if its closure is compact (in $X$).
 A regular Hausdorff space is {\em angelic} if (i) every relatively countably compact set is relatively compact; and (ii) the closure of a relatively compact set is precisely the limit of its sequences. 

\medskip 
The main theorem of \cite{BFT} asserts that for a Polish space $X$ the space $B_1(X)$ equipped with the topology of pointwise convergence is angelic.

\medskip 
A compact Hausdorff space is called {\em Rosenthal compactum} if it can be embedded in the space $B_1(X)$ of  Baire~1 functions on some Polish space $X$.

\medskip 
We are now ready to present the main result of this section.

\begin{Theorem} \label{global angelic}
	Let $T$ be a countable $NIP$ theory and $M$ a countable model of it.\footnote{An even weaker assumption can be considered. That is, if every formula is $NIP$ in $M$, then the arguments of (i), (ii) and (iv) work well. See  \cite{K3} for the definition of `$NIP$ in a model'.} Let $p(x)$ and $\mu(x)$ be a global type and a global measure which are finitely satisfiable in $M$, respectively. Then
	\begin{itemize}
		\item[(i)] 	(Simon) There is a sequence $(a_n)\in M$ such that $\lim_n tp(a_n/{\cal U})=p$.
		\item[(ii)] (Gannon) There is a sequence $(\bar{a}_n)\in M^{<\omega}$ such that $\lim_n \text{Av}(\bar{a}_n)=\mu$.
		\item[(iii)]	The limits in (i),(ii) above are $DBSC$ and Baire-1/2, respectively. That is, for every formula $\theta(x,y)$, the function $\lim_n\theta(a_n,y)$ (resp. $\lim_n \text{Av}(\bar a_n)(\theta(x,y))$) is $DBSC$ (resp. Baire-1/2) on $S_{\tilde\theta}(M)$.
			\item[(iv)] The space of global $M$-finitely satisfied types/measures is  a Rosenthal compactum.
	\end{itemize}
\end{Theorem}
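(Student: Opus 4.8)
The plan is to derive all four parts from the machinery already assembled in the paper, treating the ``global'' case as a countable limit of the local ($\phi$-by-$\phi$) case. First I would fix an enumeration $\{\theta_k(x,y_k):k<\omega\}$ of all $L$-formulas (up to the variable $x$), which is possible since $T$ is countable. For part (i): since $p$ is finitely satisfiable in $M$, for each $k$ the function $q\mapsto \theta_k(p,b)$ (any $b\models q$) on $S_{\tilde\theta_k}(M)$ lies in the pointwise closure of $\{\theta_k(a,y):a\in M\}\subseteq C(S_{\tilde\theta_k}(M))$ by Fact~\ref{Dirac}(i); as $M$ is countable this Stone space is metrizable (Polish), and since $\theta_k$ is $NIP$, Proposition~5J of \cite{BFT} (or Theorem~4D) gives that this pointwise closure is contained in the sequential pointwise closure, i.e. the limit function is realized by a \emph{sequence} $(\theta_k(a_n,y))_n$. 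A diagonal argument across $k<\omega$, using compactness of $S(M)$ (so that any sequence $(tp(a_n/{\cal U}))$ has a convergent subnet) together with the fact that along a subsequence we can force convergence at $\theta_k$ for every $k$ simultaneously, produces a single sequence $(a_n)\in M$ with $\lim_n tp(a_n/{\cal U})=p$. This is exactly Simon's result; the point is that the paper already packages the local input in Lemma~\ref{bounded variation-continuous} and Theorem~\ref{DBSC measure}.

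For part (ii) I would run the same argument one level up: by Fact~\ref{Dirac}(v), $\mu$ restricted to each $\Delta_{\theta_k}$ corresponds to a function in the pointwise closure of the convex hull $\mathrm{conv}\{\theta(a,y):a\in M,\ \theta\in\Delta_{\theta_k}\}$; $NIP$ of $\theta_k$ plus countability of $M$ lets me invoke Proposition~5J of \cite{BFT} again to replace ``closure of convex hull'' by ``sequential limit of averages'' $Av(\bar a_n)$. Then the same diagonalization over $k<\omega$ (the set $\bigcup_k\Delta_{\theta_k}$ of Boolean combinations is still countable) yields one sequence $(\bar a_n)\in M^{<\omega}$ with $\lim_n Av(\bar a_n)=\mu$; this is Gannon's theorem, and the diagonal step is essentially the one already carried out in the proof of Corollary~\ref{Key lemma 2}. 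Part (iii) is then immediate from what the paper has proved about the limits: the limit in (i) is $DBSC$ by Lemma~\ref{bounded variation-continuous}(iii) (equivalently Theorem~\ref{DBSC measure}(iii)), and the limit in (ii) is Baire-1/2 by Lemma~\ref{bounded variation-continuous}(iv) (equivalently Theorem~\ref{DBSC measure}(ii)); one just notes these are applied formula-by-formula, so $\lim_n\theta(a_n,y)$ is $DBSC$ and $\lim_n Av(\bar a_n)(\theta(x,y))$ is Baire-1/2 on $S_{\tilde\theta}(M)$ for \emph{every} $\theta$.

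For part (iv): the space $S^{fs}(M)$ of global $M$-finitely satisfiable types embeds, via $p\mapsto (p_{\theta_k})_k$, into the product $\prod_k B_1(S_{\tilde\theta_k}(M))$ of Baire-1 function spaces on metrizable compacta; by Rosenthal's characterization (the $NIP$ hypothesis on each $\theta_k$ rules out an independent sequence, hence by Rosenthal's dichotomy no subsequence equivalent to the $\ell^1$-basis) each factor is a Rosenthal compactum, and a countable product of Rosenthal compacta is Rosenthal. Alternatively, and more in the spirit of the appendix's title, I would cite the Bourgain--Fremlin--Talagrand theorem: a compact space is Rosenthal iff it is angelic and can be realized inside $B_1(Z)$ for some Polish $Z$; parts (i)--(iii) show the types/measures are realized as $B_1$ (indeed $DBSC$ / Baire-1/2) functions on the Polish spaces $S_{\tilde\theta}(M)$, and sequential density (Simon/Gannon, i.e. parts (i),(ii)) gives angelicity. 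The measure case is handled the same way using ${\frak M}^{fs}(M)\hookrightarrow\prod_k B_{1/2}(S_{\tilde\theta_k}(M))\subseteq\prod_k B_1(S_{\tilde\theta_k}(M))$.

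The main obstacle I anticipate is the passage from the genuinely \emph{local} statements in \cite{BFT} (each $\theta_k$-piece lives on its own metrizable Stone space and the BFT/Rosenthal theory applies there) to a single global object: one must check that the diagonal subsequence extracted works uniformly across all countably many formulas at once, and that ``pointwise closure $=$ sequential closure'' is used only on the separate metrizable factors, never on the non-metrizable global $S(M)$ or $S({\cal U})$. The $NIP$ hypothesis is what makes each factor Rosenthal (equivalently, blocks the $\ell^1$-behaviour via Lemma~\ref{bounded variation-continuous}), and countability of $T$ and of $M$ is what keeps the index set and each factor in the metrizable range; both must be tracked carefully. Everything else is bookkeeping that the paper's Fact~\ref{Dirac} dictionary and Lemma~\ref{bounded variation-continuous} have already set up.
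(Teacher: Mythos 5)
Your outline for parts (iii) and (iv) is essentially sound, but the core of (i) and (ii) has a genuine gap, and it is exactly the point you flag as ``the main obstacle'' and then set aside as bookkeeping. Applying Proposition~5J / Theorem~4D of \cite{BFT} separately to each family $\{\theta_k(a,y):a\in M\}$ on $S_{\tilde\theta_k}(M)$ gives, for each $k$, \emph{its own} sequence converging to $p_{\theta_k}$; these sequences need not be compatible. The diagonalization you then propose does not repair this: if $(a_n)$ satisfies $\theta_1(a_n,y)\to p_{\theta_1}$, then $NIP$ of $\theta_2$ lets you extract a subsequence along which $\theta_2(a_{n_j},y)$ converges to \emph{some} function, but there is no reason it converges to $p_{\theta_2}$ --- the diagonal sequence converges to a global type that agrees with $p$ only on the formulas you happened to control, and passing to convergent subnets via compactness of $S(M)$ cannot substitute for sequences. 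The quantifier order is wrong: you must apply the BFT sequential-closure theorem \emph{once}, to a single family on a single Polish space that encodes all formulas simultaneously, rather than formula-by-formula followed by gluing.

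That single encoding is the actual content of the paper's proof: enumerate the formulas $\theta_n(x)$, form the Polish space $X^*=\prod_n X_n^*$ (each $X_n^*$ being $S_{\tilde\theta_n}(M)$ with an isolated point adjoined), and attach to each $a\in M$ the single function $f_a(q_1,q_2,\dots)=\sum_n 2^{-n}\theta_n(a,q_n)$, approximated by its continuous truncations $f_a^k$. The diagonal argument is then used only where it is legitimate --- to show that every sequence in $A=\{f_a^k\}$ has \emph{some} pointwise convergent subsequence (relative sequential compactness), with no need to control the limit --- after which Theorem~4D of \cite{BFT}, applied to this one family on this one Polish space, yields that the specific point $f_p$ of the pointwise closure is a sequential limit; Proposition~5J applied to the convex hull gives (ii). Your factor-by-factor embedding into $\prod_k B_1(S_{\tilde\theta_k}(M))$ for part (iv) is in the same spirit and does work (a countable product of Rosenthal compacta, realized on the disjoint union of the underlying Polish spaces, is Rosenthal), but for (i) and (ii) the global Polish space must be built \emph{before} invoking angelicity, not after. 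With that repair, your use of the dictionary in Fact~\ref{Dirac} and of Lemma~\ref{bounded variation-continuous}(iii),(iv) for part (iii) matches the paper.
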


\noindent
Before giving the proof let us remark:

\begin{Remark}
Note that	(iv) is a generalization of (i)/(ii) as follows: Suppose that  the space of global $M$-finitely satisfied  measures ${\frak M}^M_f({\cal U})$ embeds in $B_1(X)$ where $X$ is a Polish space. Therefore, by Theorem~3F of \cite{BFT},  ${\frak M}^M_f({\cal U})$ is angelic. (Recall that any  subspace of an angelic space is angelic.) On the other hand, ${\frak M}^M_f({\cal U})$ is the pointwise closure of the averages of global $M$-realized types. This proves (ii). Similarly for (i).
\end{Remark}

\begin{proof}[Proof of Theorem~\ref{global angelic}] 
	We want to store the information of each $M$-finitely satisfied type $p$ (resp. measure $\mu$)  by a function of the form $f_p:X\to[0,1]$ (resp. $f_\mu:X\to[0,1]$) where  $X$ is a Polish space.
	
	 Let $\theta_1(x,y_1),\theta_2(x,y_2),\ldots$ be an enumeration of all formulas with the variable $x$. For each $n$, let $X_n$ be the Stone space $S_{\tilde\theta_n}(M)$.  (Note that the $X_n$'s are Polish, since $M$ is countable.)
	 For each $n$, we adjoin an isolated point $q_n^*$ to $X_n$ and define $X_n^*:=X_n\cup\{q_n^*\}$ with the topological sum.\footnote{See \cite{Eng}, page 74.}  
	   Set $X=\prod_n X_n^*$ with the product topology. (As the product of a countable number of Polish spaces is  Polish, $X$ is Polish.)
	 We don't actually need the isolated points $\{q_1^*,q_2^*,\ldots\}$ but it is convenient to include them.

	Let $a\in M$ and $q\in S_{\tilde\theta}(M)$. We let $\theta(a,q)=1$ if for some (any)  $b\models q$ we have $\models\theta(a,b)$, and $\theta(a,q)=0$ in otherwise. For the isolated points $q_n^*$, we always assume that $\theta_n(a,q_n^*)=1$. In general, if $p(x)$  is a global $M$-finitely satisfied type, 
	 we let $\theta(p,q)=1$ if for some (any)  $b\models q$ we have $\theta(x,b)\in p$, and $\theta(p,q)=0$ in otherwise. For the isolated points $q_n^*$, we always assume that $\theta_n(p,q_n^*)=1$. Similarly, for a global $M$-finitely satisfied measure $\mu(x)$ and any isolated point $q_n^*$, we let $\mu(\theta_n(x,q_n^*))=1$.

	Let $p(x)\in S({\cal U})$ be finitely satisfiable in $M$.
	The following function $f_p$ stores the information of the type $p(x)$.  Define  $f_p:X\to[0,1]$ by $(q_n)\mapsto \sum_{n=1}^\infty \frac{1}{2^n}\theta_n(p,q_n)$. In general, if $\mu\in\frak{M}(\cal U)$ is a global $M$-finitely satisfiable measure, we define  $f_\mu:X\to[0,1]$ by $(q_n)\mapsto \sum_{n=1}^\infty \frac{1}{2^n}\mu(\theta_n(x,q_n))$.
	
	 \medskip\noindent
	We point out that    the information of the type $p(x)$ is stored in the function $f_p$. In fact, the map $p\mapsto f_p$ is a bijection from all global $M$-finitely satisfied types (with the variable $x$) onto all functions defined above. Furthermore,
	
	\vspace{4pt}
	\underline{Claim}: The map $p\mapsto f_p$ is bicontinuous. 
	
	\emph{Proof}:
	 We show that this map is continuous. The inverse is even easier. First notice that, $S({\cal U})$ is  equipped with the Stone topology and $[0,1]^X$ has the topology of pointwise convergence. Now consider the relative topologies in both cases.
	 
	 Let $(p_\alpha)$ be a net of global $M$-finitely satisfied types such that $(p_\alpha)$ converges to a type $p$. We will show that $f_{p_\alpha}\to f_p$. 	In the following we write $r\approx_\epsilon s$ if $|r-s|\leqslant \epsilon$. For each $\epsilon>0$ there is a natural number $N$ such that for any $(q_n)\in X$ we have:
	 \begin{align*}
	 \lim_\alpha f_{p_\alpha}(q_n)  & =\lim_\alpha\big(\sum_{n=1}^\infty\frac{1}{2^n}\theta_n(p_\alpha,q_n)\big)   =\lim_\alpha\big(\lim_k\sum_{n=1}^k\frac{1}{2^n}\theta_n(p_\alpha,q_n)\big) \\
	 &  \approx_\epsilon \lim_\alpha\big(\sum_{n=1}^N\frac{1}{2^n}\theta_n(p_\alpha,q_n)\big) 
	  =\sum_{n=1}^N\lim_\alpha\frac{1}{2^n}\theta_n(p_\alpha,q_n) \\
	 &  = \sum_{n=1}^N\frac{1}{2^n}\theta_n(p,q_n)  \approx_\epsilon
	  \sum_{n=1}^\infty\frac{1}{2^n}\theta_n(p,q_n) \\
	  & =f_p(q_n)
	 \end{align*}
Notice that for $\epsilon>0$ we can assume $N$ is the smallest natural number such that $\sum_{n=N}^\infty \frac{1}{2^n}<\epsilon$. As $\epsilon$ is arbitrary, the proof is completed.\hfill$\dashv_{\text{claim}}$ 
	
	\medskip
	
	 The same holds for global $M$-finitely satisfied measures.
	 
	 \medskip
	 
	 Therefore, every $M$-finitely satisfiable global type  corresponds to the limit of a net of functions in $\{f_a:a\in M\}$. (Cf. Fact~\ref{Dirac}).
	
	We emphasize that the functions $f_a$, $a\in M$, are continuous. Indeed, we define the following functions: for any $a\in M$ and $k\in \Bbb N$, we define a function $f_a^k:X\to[0,1]$ by $(q_n)\mapsto \sum_{n=1}^k \frac{1}{2^n}\theta_n(a,q_n)$. 
	Recall that projection functions and linear combinations of continuous functions are continuous. (In fact, the space of all continuous functions is a linear vector space.)
	Therefore the $f_a^k$'s are continuous. As $f_a^k\nearrow f_a$ uniformly and the uniform limit of a sequence of continuous functions is continuous, so $f_a$ is continuous.
	(We only used the $f_a^k$'s to prove the continuity of $f_a$, and we have no further work with the $f_a^k$'s in the rest of paper.)

	\medskip\noindent Let $A=\{f_a:a\in M, k\in{\Bbb N}\}$. 
	 We list our observations:
	\newline (1) $X$ is Polish.
	\newline (2) $f_a$ is continuous (for each $a\in M$).
	\newline (3)  The information of every global $M$-finitely satisfied type $p(x)$ is stored in a function $f_p$ in the closure $\overline A$ of $A$.  Furthermore, the map $p\mapsto f_p$ defined above is a homeomorphism, i.e, (i) it is one-to-one and onto, (ii) it is continuous and its inverse is so.
			
\vspace{4pt}
\underline{Claim}:  $A$ is relatively sequentially compact in $[0,1]^{X}$. That is, every sequence in $A$ has a pointwise convergent subsequence in $[0,1]^{X}$. 
 
\emph{Proof}:
Let $(f_{a_n})$  be a sequence in $A$. As $\theta_1,\theta_2,\ldots$ are $NIP$, by a diagonal argument, it is easy to verify that there is a convergent subsequence. Indeed, by Lemma~\ref{bounded variation-continuous}, let $(a_n^1)$ be a subsequence of $(a_n)$ such that the sequence $(\theta_1(a_n^1,y_1)$) converges as $n\to\infty$.  Similarly, by induction, let  $(a_n^i)$ be a subsequence of $(a_n^{i-1})$ such that the sequence $(\theta_i(a_n^i,y_i)$) converges as $n\to\infty$. Let $(c_n)$ be the diagonal of these sequences, i.e. $c_n=a_n^n$ for all $n$.\footnote{Alternatively, by an adaptation of \cite[Lemma~2.7]{S-invariant}, one can find a convergent  subsequence $(f_{c_n})$.}

It is easy to check that the subsequence $(f_{c_n})$ converges. Indeed, we show that for any $(q_i)\in X$ and any $\epsilon>0$, the sequence $(f_{c_n}(q_i))$ is Cauchy. Note that, for each $i$, there is $n_i$ such that  the sequence $(\theta_i(c_j,q_i):  n_i\leq j)$ is constant.
Let $m$ be the smallest natural number such that $\sum_{k=m}^\infty\frac{1}{2^k}<\epsilon$, and
let $N=\max\{n_i:i< m\}$. Therefore, $|f_{c_j}(q_i)-f_{c_{j'}}(q_i)|<\epsilon$ for all $j,j'> N$.\hfill$\dashv_{\text{claim}}$ 
	
\medskip	
By  the equivalence (i)~$\Longleftrightarrow$~(vi)  of \cite[Theorem 4D]{BFT}, every $f$ in the closure of $A$ is the limit of a {\bf sequence} in $A$.  By our translation (cf. Fact~\ref{Dirac} and Remark~\ref{remark global}),  this means that there is a sequence $(a_n)\in M$ such that $\lim_n tp(a_n/{\cal U})=p$. This proves~(i).

\medskip
(ii): Let $A$ be as above, $\text{Av}(A):=\{\frac{1}{n}(f_1+\cdots+f_n): f_i\in A,~ i\leq n\in{\Bbb N}\}$ and $\text{conv}(A):=\{\sum_1^nr_i\cdot f_i: f_i\in A,~n\in{\Bbb N},~r_i\in {\Bbb R}^+,~\sum_1^n r_i=1\}$. ($\text{conv}(A)$ is called the convex hull of $A$. If necessary, one can assume that $r_i\in{\Bbb Q}$.)
Clearly, the closures of $\text{Av}(A)$ and $\text{conv}(A)$ are the same, i.e. $\overline{\text{Av}}(A)=\overline{\text{conv}}(A)$.

Note that the information of $\text{Av}(a_1,\ldots,a_n)$ is stored in $\frac{1}{n}(f_{a_1}+\cdots+f_{a_n})$.

Again, by  \cite[Proposition~5J]{BFT},  as  $A$ is relatively sequentially compact, $\text{conv}(A)$ is relatively sequentially compact,  and so  every $h$ in the closure of $\text{conv}(A)$ is the limit of a {\bf sequence} $(g_n)$ in $\text{conv}(A)$.
In fact, we can find a sequence $(h_n)$ in $\text{Av}(A)$ such that $h_n\to h$. Indeed, suppose that $g_n=\sum_1^{k_n} r_i\cdot f_i$.
Define $h_n:=\sum_1^{k_n} s_i\cdot f_i$ where
 $|r_i-s_i|<\frac{1}{n\cdot k_n}$ and $s_i\in{\Bbb Q}$. Then $(h_n)\in \text{Av}(A)$ and $h_n\to h$.\footnote{For example, $\frac{3}{10}f_1+\frac{2}{10}f_2+\frac{1}{2}f_3=\frac{1}{10}(f_1+f_1+f_1+f_2+f_2+f_3+f_3+f_3+f_3+f_3)$.}

 By our translation (cf. Fact~\ref{Dirac} and Remark~\ref{remark global}), this proves~(ii).

\medskip
(iii) follows from Lemma~\ref{bounded variation-continuous}.

\medskip
(iv) follows from the arguments of (i) and  (ii). (Note that $f_{a_i}\to f_p$ if and only if $tp(a_i/{\cal U})\to p$. Furthermore, $\overline{A}\subseteq\overline{\text{conv}}(A)\subseteq B_1(X)$.)
\end{proof}

\medskip
At the end paper let us remark:

\begin{Remark}
The approach of the present paper allows us to make `easy' generalizations and to give more new results. We list some of them.
(i)~The above arguments  work  well in the framework of {\em  continuous logic} \cite{BBHU}. 
(ii)~The argument  can be adapted to global {\em  invariant} types/measures. That is, given a countable $NIP$ theory $T$ and countable model $M$, the space of global $M$-invariant types/measures is  a Rosenthal compactum. (This generalizes another result of Simon \cite[Theorem~0.1]{Simon-Rosenthal} that is about invariant $\phi$-types.)
 (iii)~The argument  leads us to another proof of the theorem that generically stable measures in $NIP$ theories (not necessarily countable) are also finitely approximated.
 (iv)~The argument leads us to the theorem that a countable theory is stable if and only if for {\em  any}  model $M$ the space of  global $M$-finitely satisfied types/measures is {\em Eberlein compactum}.\footnote{A compact space is an Eberlein compactum if 
 	it can be embedded in the space $C(X)$ of continuous functions on some compact Hausdorff space $X$.} (This generalizes the results in \cite{Ben-Groth} and   \cite{Pillay} in several ways. Indeed, recall from \cite[Thm~462B]{Fremlin4} that every Eberlein compactum is angelic.)
--- We will study (i)--(iv) in a future work.
\end{Remark}

\bigskip\noindent
{\bf Acknowledgements.}
I want to thanks David Fremlin, Gilles Godefroy and Pierre Simon
for   their helpful comments. I thank the anonymous referee for his detailed suggestions and corrections; they helped to improve significantly the exposition of this paper.
I would like to thank the Institute for Basic Sciences (IPM), Tehran, Iran. Research partially supported by IPM grant  no 1400030118.

\end{document}